\theoremstyle{plain}
\newtheorem{theorem}{Theorem}[section]
\theoremstyle{remark}
\newtheorem{remark}[theorem]{Remark}
\theoremstyle{plain}
\newtheorem{corollary}[theorem]{Corollary}
\newtheorem{lemma}[theorem]{Lemma}
\newtheorem{proposition}[theorem]{Proposition}
\numberwithin{equation}{section}
\def\N{{\mathbb N}}
\def\Z{{\mathbb Z}}
\def\R{{\mathbb R}}
\def\C{{\mathbb C}}
\newcommand{\F}{{\mathscr F}}
\newcommand{\e}{\varepsilon}
\newcommand{\wh}{\widehat}
\newcommand{\Schw}{{\mathscr S}}
\newcommand{\TD}{{\mathscr S'}}
\def\typeout#1{\message{^^J}\message{#1}\message{^^J}}
\newif\ifSRCOK \SRCOKtrue
\def\EJECT{\SRC\eject}
\def\WinEdt#1{\typeout{:#1}}
\gdef\MainFile{\jobname.tex}
\gdef\CurrentInput{\MainFile}
\def\SRC{\ifSRCOK%
  \ifnum\inputlineno>\LASTLINE%
    \ifnum\LASTLINE<0%
      \global\PAGETOP=\inputlineno%
    \fi%
    \global\LASTLINE=\inputlineno%
    \ifnum\INPSP=0%
      \ifnum\inputlineno>\PAGETOP%
        
      \fi%
    \else%
      
    \fi%
  \fi%
\fi}
\def\PUSH#1{%
\SRC%
\ifnum\INPSP=0 \global\let\INPSTACKA=\CurrentInput \else%
\ifnum\INPSP=1 \global\let\INPSTACKB=\CurrentInput \else%
\ifnum\INPSP=2 \global\let\INPSTACKC=\CurrentInput \else%
\ifnum\INPSP=3 \global\let\INPSTACKD=\CurrentInput \else%
\ifnum\INPSP=4 \global\let\INPSTACKE=\CurrentInput \else%
\ifnum\INPSP=5 \global\let\INPSTACKF=\CurrentInput \else%
               \global\let\INPSTACKX=\CurrentInput \fi\fi\fi\fi\fi\fi%
\gdef\CurrentInput{#1}%
\WinEdt{<+ \CurrentInput}%
\global\LASTLINE=0%
\ifSRCOK\fi%
\global\advance\INPSP by 1}
\def\POP{%
\ifnum\INPSP>0 \global\advance\INPSP by -1  \fi%
\ifnum\INPSP=0 \global\let\CurrentInput=\INPSTACKA \else%
\ifnum\INPSP=1 \global\let\CurrentInput=\INPSTACKB \else%
\ifnum\INPSP=2 \global\let\CurrentInput=\INPSTACKC \else%
\ifnum\INPSP=3 \global\let\CurrentInput=\INPSTACKD \else%
\ifnum\INPSP=4 \global\let\CurrentInput=\INPSTACKE \else%
\ifnum\INPSP=5 \global\let\CurrentInput=\INPSTACKF \else%
               \global\let\CurrentInput=\INPSTACKX \fi\fi\fi\fi\fi\fi%
\WinEdt{<-}%
\global\LASTLINE=\inputlineno%
\global\advance\LASTLINE by -1%
\SRC}
\def\INPUT#1{\relax}
\def
\let\originalxxxeverypar\everypar
\newtoks\everypar
\everymath\expandafter{\the\everymath\expandafter\SRC}
\output\expandafter{\expandafter\SRCOKfalse\the\output}
\newif\ifSRCOK \SRCOKtrue
\gdef\MainFile{\jobname.tex}
\gdef\CurrentInput{\MainFile}
\def\EJECT{\SRC\eject}
\def\WinEdt#1{\typeout{:#1}}
\def\SRC{\ifSRCOK%
  \ifnum\inputlineno>\LASTLINE%
    \ifnum\LASTLINE<0%
      \global\PAGETOP=\inputlineno%
    \fi%
    \global\LASTLINE=\inputlineno%
    \ifnum\INPSP=0%
      \ifnum\inputlineno>\PAGETOP%
      \fi%
    \else%
    \fi%
  \fi%
\fi}
\def\PUSH#1{%
\SRC%
\ifnum\INPSP=0 \global\let\INPSTACKA=\CurrentInput \else%
\ifnum\INPSP=1 \global\let\INPSTACKB=\CurrentInput \else%
\ifnum\INPSP=2 \global\let\INPSTACKC=\CurrentInput \else%
\ifnum\INPSP=3 \global\let\INPSTACKD=\CurrentInput \else%
\ifnum\INPSP=4 \global\let\INPSTACKE=\CurrentInput \else%
\ifnum\INPSP=5 \global\let\INPSTACKF=\CurrentInput \else%
               \global\let\INPSTACKX=\CurrentInput \fi\fi\fi\fi\fi\fi%
\gdef\CurrentInput{#1}%
\WinEdt{<+ \CurrentInput}%
\global\LASTLINE=0%
\ifSRCOK\fi%
\global\advance\INPSP by 1}
\def\POP{%
\ifnum\INPSP>0 \global\advance\INPSP by -1  \fi%
\ifnum\INPSP=0 \global\let\CurrentInput=\INPSTACKA \else%
\ifnum\INPSP=1 \global\let\CurrentInput=\INPSTACKB \else%
\ifnum\INPSP=2 \global\let\CurrentInput=\INPSTACKC \else%
\ifnum\INPSP=3 \global\let\CurrentInput=\INPSTACKD \else%
\ifnum\INPSP=4 \global\let\CurrentInput=\INPSTACKE \else%
\ifnum\INPSP=5 \global\let\CurrentInput=\INPSTACKF \else%
               \global\let\CurrentInput=\INPSTACKX \fi\fi\fi\fi\fi\fi%
\WinEdt{<-}%
\global\LASTLINE=\inputlineno%
\global\advance\LASTLINE by -1%
\SRC}
\def\INPUT#1{\relax}
\let\OldINCLUDE=\include
\def\include#1{
\EJECT%
\PUSH{#1.tex}%
\OldINCLUDE{#1}%
\POP}
\def
\let\originalxxxeverypar\everypar
\newtoks\everypar
\everymath\expandafter{\the\everymath\expandafter\SRC}
\let\zzzxxxbibliography=\bibliography
\def\bibliography#1{\PUSH{\jobname.bbl}\zzzxxxbibliography{#1}\POP}
\output\expandafter{\expandafter\SRCOKfalse\the\output}
\begin{document}

\author{Martin Meyries}
\address{M. Meyries, Institut f\"ur Mathematik, Martin-Luther-Universit\"at Halle-Wittenberg,
06099 Halle (Saale), Germany}
\email{martin.meyries@mathematik.uni-halle.de}

\author{Mark Veraar}
\address{M. Veraar, Delft Institute of Applied Mathematics\\
Delft University of Technology \\ P.O. Box 5031\\ 2600 GA Delft\\The
Netherlands} \email{M.C.Veraar@tudelft.nl}

\title[Characterization of embeddings]{Characterization of a class of embeddings for function spaces with Muckenhoupt weights}

\keywords{Sobolev embeddings, Jawerth-Franke embeddings, Muckenhoupt weights, Sobolev spaces, Slobodetskii spaces, Bessel-potential spaces, Triebel-Lizorkin spaces, Besov spaces, vector-valued function spaces}
\subjclass[2000]{46E35, 46E40}

\thanks{The second author was supported by a VIDI subsidy 639.032.427 of the Netherlands Organisation for Scientific Research (NWO)}

\begin{abstract}
For function spaces equipped with Muckenhoupt weights, the validity of continuous Sobolev embeddings in case $p_0\leq p_1$ is characterized. Extensions to Jawerth-Franke embeddings, vector-valued spaces and examples involving some prominent weights are also provided.
\end{abstract}
\maketitle

\section{Introduction}
This note is concerned with the characterization of continuous embeddings for function spaces equipped with Muckenhoupt weights $w$, and their vector-valued extensions. We consider Bessel-potential spaces $H^{s,p}(\R^d,w)$, Sobolev-Slobodetskii spaces $W^{s,p}(\R^d,w)$,  Triebel-Lizorkin spaces $F_{p,q}^s(\R^d,w)$ and Besov spaces $B_{p,q}^s(\R^d,w)$. Definitions, basic properties and references to syste\-matic investigations of these spaces and Muckenhoupt's $A_p$-classes are given in Section \ref{sec:def}.

Sharp embeddings of Sobolev type are fundamental for these function spaces and their applications. In the unweighted case, characterizations of their validity in terms of the parameters are classical, see \cite{SiTr}. In the presence of Muckenhoupt weights, the following characterization for Besov spaces was obtained in \cite[Proposition 2.1]{HS08}, via wavelet representations of Besov norms and the corresponding discrete characterizations from \cite[Theorem 3.1]{KLSS}.

\begin{proposition}[Haroske $\&$ Skrzypczak \cite{HS08}] \label{prop:Besovch}
Let $w_0,w_1\in A_{\infty}$, $s_0\geq s_1$, $0<p_0, p_1\leq \infty$ and $0<q_0,q_1\leq \infty$. Define $0<p_*,q_*\leq \infty$ by
\[\frac{1}{p_{*}} = \Big(\frac{1}{p_1} - \frac{1}{p_0}\Big)_+ \  \ \text{and} \ \ \frac{1}{q_{*}} = \Big(\frac{1}{q_1} - \frac{1}{q_0}\Big)_+.\]
Then there is a continuous embedding
$$B^{s_0}_{p_0, q_0}(\R^d,w_0)\hookrightarrow B^{s_1}_{p_1, q_1}(\R^d,w_1)$$
if and only if
\begin{equation}\label{cond-besov}
\Big\|\Big(2^{-\nu(s_0-s_1)}\Big\|\big[w_0(Q_{\nu, m})^{-1/p_0} w_1(Q_{\nu, m})^{1/p_1}  \big]_{m\in\Z^d}\Big\|_{\ell^{p_*}}\Big)_{\nu \in \N_0}\Big\|_{\ell^{q_*}}<\infty.
\end{equation}
\end{proposition}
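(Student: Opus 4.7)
The plan is to reduce the question to an inclusion between weighted mixed-norm sequence spaces via a wavelet characterization, and then invoke the discrete result \cite[Theorem 3.1]{KLSS}. Using sufficiently regular Daubechies wavelets $\{\psi_{\nu,m}\}$ indexed by the dyadic cubes $Q_{\nu,m}$, one has the norm equivalence (valid for every $w\in A_\infty$ and all admissible $s,p,q$)
\[
\|f\|_{B^{s}_{p,q}(\R^d,w)} \;\sim\; \Bigl(\sum_{\nu\geq 0} 2^{\nu s q}\Bigl(\sum_{m\in\Z^d} |\lambda_{\nu,m}(f)|^{p}\, w(Q_{\nu,m})\Bigr)^{q/p}\Bigr)^{1/q},
\]
where $\lambda_{\nu,m}(f)$ is a suitably normalized wavelet coefficient (with the obvious modifications when $p$ or $q$ is infinite). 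The embedding in the proposition is thus equivalent to the corresponding inclusion $b^{s_0}_{p_0,q_0}(w_0)\hookrightarrow b^{s_1}_{p_1,q_1}(w_1)$ of sequence spaces.

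\emph{Sufficiency.} For each fixed $\nu$, the substitution $a_m=\lambda_{\nu,m}\,w_0(Q_{\nu,m})^{1/p_0}$ turns the $\ell^{p_1}(w_1(Q_{\nu,\cdot}))$-norm of $(\lambda_{\nu,m})_m$ into the $\ell^{p_1}$-norm of $(a_m c_{\nu,m})_m$, where $c_{\nu,m}:=w_0(Q_{\nu,m})^{-1/p_0}w_1(Q_{\nu,m})^{1/p_1}$. Pointwise multiplication by $(c_{\nu,m})_m$ maps $\ell^{p_0}$ into $\ell^{p_1}$ with operator norm $\|(c_{\nu,m})_m\|_{\ell^{p_*}}$: this is Hölder's inequality with exponent $p_*$ when $p_1<p_0$, and the trivial bound $\|(c_{\nu,m})_m\|_{\ell^{\infty}}$ combined with $\ell^{p_0}\hookrightarrow \ell^{p_1}$ when $p_1\geq p_0$. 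A second Hölder inequality in $\nu$ with exponent $q_*$, carrying the factor $2^{\nu(s_1-s_0)}$ (admissible since $s_0\geq s_1$), then dominates the $b^{s_1}_{p_1,q_1}(w_1)$-norm by a constant multiple of the $b^{s_0}_{p_0,q_0}(w_0)$-norm; the constant is precisely the left-hand side of \eqref{cond-besov}.

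\emph{Necessity.} Testing the sequence-space embedding against sequences $(\lambda_{\nu,m})$ supported on a single scale $\nu_0$ and chosen to saturate the inner Hölder step yields $2^{-\nu_0(s_0-s_1)}\|(c_{\nu_0,m})_m\|_{\ell^{p_*}}$ as a lower bound for the embedding constant at that scale; varying $\nu_0$ and passing to gliding-hump combinations across scales, chosen to saturate the outer Hölder step, delivers the $\ell^{q_*}$-control in $\nu$. Admissibility of such test sequences as wavelet coefficients of genuine elements of $B^{s_0}_{p_0,q_0}(\R^d,w_0)$ is built into the wavelet characterization, closing the argument.

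The main technical hurdle is the wavelet characterization itself in the weighted setting, especially in the quasi-Banach range $\min(p,q)<1$: one must verify that a sufficiently smooth and well-localized wavelet system with enough vanishing moments forms an unconditional basis of $B^{s}_{p,q}(\R^d,w)$ with the norm equivalence above, and this relies on $A_\infty$-maximal function estimates and Peetre-type inequalities adapted to the weight. Once this reduction and the discrete characterization from \cite{KLSS} are in hand, the translation into \eqref{cond-besov} via the two Hölder steps above is routine.
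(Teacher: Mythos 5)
Your proposal follows essentially the same route as the paper, which does not prove this proposition itself but quotes it from \cite[Proposition 2.1]{HS08}, obtained there precisely via wavelet representations of weighted Besov norms combined with the discrete characterization of \cite[Theorem 3.1]{KLSS}; your two Hölder steps (with the multiplier-norm identity $\ell^{p_0}\to\ell^{p_1}$ equal to the $\ell^{p_*}$-norm) and the single-scale/gliding-hump testing for necessity are exactly the content of that discrete result. The one substantive burden you correctly flag but do not discharge --- the weighted wavelet (or local means) characterization in the full $A_\infty$, quasi-Banach range --- is likewise delegated to the literature by the paper, so your sketch is faithful to the argument actually relied upon.
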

Here $r_+ = \max\{r, 0\}$, $\ell^p$ are the usual sequence spaces, $Q_{\nu,m}\subset \R^d$ denotes for $\nu\in \N_0$ and $m\in \Z^d$ the $d$-dimensional cube with sides parallel to the coordinate axes, centered at $2^{-\nu}m$ and with side length $2^{-\nu}$, and further $w(Q) = \int_Q w(x)\,dx$ for a weight $w$ and a cube $Q$. Observe that in the above result, for $p_0\leq p_1$ and $q_{0}\leq q_1$ one has $p_* = q_* = \infty$. It is interesting to note that the condition \eqref{cond-besov} improves if one restricts to radially symmetric distributions, even though the weights can be arbitrary, see \cite{DeNDS}.

The purpose of this paper is to provide similar characterizations in case $p_0\leq p_1$ for other types of the above mentioned weighted spaces, for embeddings of mixed type, and for the vector-valued case. Our main result concerns Triebel-Lizorkin spaces. Its proof, given in Section \ref{sec:proof-F}, is based on Proposition \ref{prop:Besovch} and a weighted Gagliardo-Nirenberg inequality. It is also the basis for the other types of embeddings in Theorem \ref{thm:main3} below.

\begin{theorem}\label{thm:main2}
Let $w_0,w_1\in A_{\infty}$, $s_0> s_1$, $0<p_0\leq p_1<\infty$ and $0<q_0,q_1\leq \infty$.
Then there is a continuous embedding
$$F^{s_0}_{p_0, q_0}(\R^d,w_0)\hookrightarrow F^{s_1}_{p_1, q_1}(\R^d,w_1)$$
if and only if
\begin{equation}\label{eq:condparam}\tag{\textbf{C}}
\sup_{\nu\in \N_0,\, m\in \Z^d} 2^{-\nu(s_0-s_1)} w_0(Q_{\nu, m})^{-1/p_0} w_1(Q_{\nu, m})^{1/p_1} <\infty.
\end{equation}
\end{theorem}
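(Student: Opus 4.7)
\emph{Necessity.} Sandwiching the assumed embedding between the standard inclusions $B^{s_0}_{p_0,\min(p_0,q_0)}(\R^d,w_0)\hookrightarrow F^{s_0}_{p_0,q_0}(\R^d,w_0)$ and $F^{s_1}_{p_1,q_1}(\R^d,w_1)\hookrightarrow B^{s_1}_{p_1,\max(p_1,q_1)}(\R^d,w_1)$ produces a Besov embedding whose microscopic parameters $u:=\min(p_0,q_0)$ and $v:=\max(p_1,q_1)$ satisfy $u\le p_0\le p_1\le v$. Hence $p_\ast=q_\ast=\infty$ in Proposition~\ref{prop:Besovch}, and the resulting condition reduces exactly to \eqref{eq:condparam}.

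\emph{Sufficiency, auxiliary embedding with smoothness loss.} Conversely, assuming \eqref{eq:condparam}, I first observe that for every $\delta\in(0,s_0-s_1)$ the chain
\[
F^{s_0}_{p_0,q_0}(\R^d,w_0)\hookrightarrow B^{s_0}_{p_0,\max(p_0,q_0)}(w_0)\hookrightarrow B^{s_1-\delta}_{p_1,\min(p_1,q_1)}(w_1)\hookrightarrow F^{s_1-\delta}_{p_1,q_1}(\R^d,w_1)
\]
is valid. The middle Besov embedding is furnished by Proposition~\ref{prop:Besovch}: the shift in smoothness contributes a factor $2^{-\nu(s_0-s_1+\delta)}$, which combines with the \eqref{eq:condparam}-bound $w_0(Q_{\nu,m})^{-1/p_0}w_1(Q_{\nu,m})^{1/p_1}\le C\,2^{\nu(s_0-s_1)}$ to produce a geometric factor $C\,2^{-\nu\delta}$, finite in $\ell^{q_\ast}$ for every $q_\ast\in(0,\infty]$.

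\emph{Sufficiency, closing the $\delta$-gap via weighted Gagliardo--Nirenberg.} To remove the $\delta$ I would invoke a weighted Gagliardo--Nirenberg interpolation inequality of the schematic form
\[
\|f\|_{F^{s_1}_{p_1,q_1}(w_1)}\le C\,\|f\|_{F^{s_0}_{p_0,q_0}(w_0)}^{\theta}\,\|f\|_{F^{\sigma}_{\tilde p,q_1}(\tilde w)}^{1-\theta},
\]
where $\theta\in(0,1)$ and the parameters $\sigma<s_1$, $\tilde p$, $\tilde w$ are determined by the usual interpolation identities $s_1=\theta s_0+(1-\theta)\sigma$, $1/p_1=\theta/p_0+(1-\theta)/\tilde p$, together with an analogous product-of-powers identity expressing $\tilde w$ in terms of $w_0,w_1$. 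Because $s_0>s_1$ strictly, one can pick $\sigma$ close enough to $s_1$ (hence $\theta$ sufficiently small) to meet the admissibility constraint $\theta<p_0/p_1$, which forces $\tilde p\in[p_1,\infty)$ and, by the stability of $A_\infty$ under products of powers, $\tilde w\in A_\infty$. The auxiliary embedding from the previous paragraph, applied with target $(\sigma,\tilde p,\tilde w)$ in place of $(s_1-\delta,p_1,w_1)$, then bounds the second factor on the right by a constant times $\|f\|_{F^{s_0}_{p_0,q_0}(w_0)}$, and the desired embedding follows.

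The \emph{main obstacle} is the weighted Gagliardo--Nirenberg inequality itself: one must set it up with the correct scaling, verify that the interpolated weight $\tilde w$ stays in $A_\infty$, and confirm that the induced \eqref{eq:condparam}-type condition for the pair $(\tilde p,\tilde w)$ follows from the original \eqref{eq:condparam}. The strict gap $s_0>s_1$ is indispensable, as it provides the slack in $\sigma$ and $\theta$ required to make the admissibility conditions hold.
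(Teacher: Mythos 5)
Your necessity argument is exactly the paper's: sandwich the assumed embedding between $B^{s_0}_{p_0,\min(p_0,q_0)}(\R^d,w_0)\hookrightarrow F^{s_0}_{p_0,q_0}(\R^d,w_0)$ and $F^{s_1}_{p_1,q_1}(\R^d,w_1)\hookrightarrow B^{s_1}_{p_1,\max(p_1,q_1)}(\R^d,w_1)$ and apply Proposition~\ref{prop:Besovch} with $p_*=q_*=\infty$; that part is fine. The sufficiency part follows the paper's strategy in spirit (Besov characterization plus a weighted Gagliardo--Nirenberg inequality), but the step that is supposed to close the argument is circular. You propose to bound the third Gagliardo--Nirenberg factor $\|f\|_{F^{\sigma}_{\tilde p,q_1}(\tilde w)}$ by $C\|f\|_{F^{s_0}_{p_0,q_0}(w_0)}$ using your ``auxiliary embedding with smoothness loss''. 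However, the interpolation identities $s_1=\theta s_0+(1-\theta)\sigma$, $1/p_1=\theta/p_0+(1-\theta)/\tilde p$ and $w_1^{1/p_1}=w_0^{\theta/p_0}\tilde w^{(1-\theta)/\tilde p}$ force
\[
2^{-\nu(s_0-\sigma)}\,w_0(Q_{\nu,m})^{-1/p_0}\,\tilde w(Q_{\nu,m})^{1/\tilde p}
\sim\Bigl(2^{-\nu(s_0-s_1)}\,w_0(Q_{\nu,m})^{-1/p_0}\,w_1(Q_{\nu,m})^{1/p_1}\Bigr)^{1/(1-\theta)}
\]
(and even this requires replacing $\tilde w(Q)$ by the corresponding product of $w_0(Q)$ and $w_1(Q)$, which is the content of the reverse H\"older estimate in Lemma~\ref{lem:reverse} and only holds for small exponents). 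So the condition governing $F^{s_0}_{p_0,q_0}(w_0)\hookrightarrow F^{\sigma}_{\tilde p,q_1}(\tilde w)$ is merely \emph{bounded}; there is no geometric factor $2^{-\nu\delta'}$, hence no $\delta$-room at all. This embedding is exactly as critical as the one you are trying to prove, and your lossy Besov sandwich (which needs genuine decay in $\nu$ to handle arbitrary $q_0,q_1$) cannot deliver it.

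The paper closes the argument differently: after reducing to $q_1\le\min\{p_0,p_1\}$, it takes the third space diagonal, $F^{t}_{r,r}(v)=B^{t}_{r,r}(v)$ with $r\ge p_1$ and $v=w_0^{-\e}w_1^{1+\e}$, so that Proposition~\ref{prop:Besovch} applies with $p_*=q_*=\infty$ and gives $\|f\|_{F^{t}_{r,r}(v)}\le C\|f\|_{F^{s_1}_{p_1,p_1}(w_1)}$ --- a bound by the \emph{target} norm rather than the source norm. The resulting inequality $\|f\|_{F^{s_1}_{p_1,q_1}(w_1)}\le C\|f\|_{F^{s_0}_{p_0,q_0}(w_0)}^{1-\theta}\|f\|_{F^{s_1}_{p_1,q_1}(w_1)}^{\theta}$ is then closed by absorption (dividing by $\|f\|_{F^{s_1}_{p_1,q_1}(w_1)}^{\theta}$), first on the intersection of source and target and then for general $f$ by density (for $q_0<\infty$) and the Fatou property (for $q_0=\infty$) --- an approximation step your sketch omits entirely. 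Two further points you flag as ``obstacles'' are genuinely needed and nontrivial: membership of $\tilde w=w_0^{-a}w_1^{1+\delta}$ in the appropriate $A_r$ class is not a generic ``stability of $A_\infty$ under products of powers'' (negative powers of $A_\infty$ weights need not be in $A_\infty$) but requires Lemma~\ref{lem:stable}, and together with Lemma~\ref{lem:reverse} it restricts the admissible $\theta$. To rescue your scheme you should redirect the bound on the third Gagliardo--Nirenberg factor toward the target space and absorb, as the paper does.
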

In terms of the weighted means $\widetilde w(Q) = \frac{1}{|Q|} \int_Q w(x) \,dx$, the condition \eqref{eq:condparam} reads
$$\sup_{\nu\in \N_0,\, m\in \Z^d} 2^{-\nu[s_0 - \frac{d}{p_0} -( s_1- \frac{d}{p_1})]} \widetilde w_0(Q_{\nu, m})^{-1/p_0} \widetilde w_1(Q_{\nu, m})^{1/p_1} <\infty.$$
In this way one immediately recovers the well-known inequality $s_0 - \frac{d}{p_0} \geq s_1 - \frac{d}{p_1}$ in the unweighted case $w_0 \equiv w_1 \equiv 1$.

Observe that the condition \eqref{eq:condparam} differs from the one \eqref{cond-besov} for Besov spaces in its independence of the \emph{microscopic parameters} $q_0,q_1$. It corresponds to \eqref{cond-besov} in case when $p_* = q_* = \infty$, i.e., $p_0\leq p_1$ and $q_0\leq q_1$.  The independence of some properties of $F$-spaces -- in particular sharp embeddings and trace spaces --  of the microscopic parameters is well-known in the unweighted case \cite{Tri83}. It is somewhat surprising and turns out to be very useful in the general vector-valued case, where $H$- and $W$-spaces are not contained in the $F$- and $B$-scale, see   \cite{MeyVer3, MeyVer2, SSS12, SchmSi05}. The special case of radial power weights $w(x) = |x|^\gamma$ was obtained in \cite{MeyVer1} based on a corresponding Plancherel-P\'olya-Nikol'skii type inequality, which is interesting on its own.

As a consequence of Theorem \ref{thm:main2} we may characterize various types of embeddings for vector-valued spaces, at least in case of $A_p$-weights and $p_0>1$ (see Sections \ref{sec:proof-mixed} and \ref{sec:vector} for the proofs). In Section \ref{sec:ex} we comment on the crucial condition \eqref{eq:condparam} for some classes of power weights, see also \cite{DeNDS, HS08}, and on the case $p_1 < p_0$.

\begin{theorem} \label{thm:main3} Let $X$ be a Banach space, $s_0> s_1$, $1<p_0\leq p_1<\infty$, $w_0\in A_{p_0}$ and $w_1\in A_{p_1}$. Then one has the continuous embeddings
\begin{align*}
B_{p_0,q_0}^{s_0}(\R^d,w_0;X) & \hookrightarrow B_{p_1,q_1}^{s_1}(\R^d,w_1;X), &  1\leq q_0\leq q_1\leq \infty;
\\  F_{p_0,q_0}^{s_0}(\R^d,w_0;X) & \hookrightarrow F_{p_1,q_1}^{s_1}(\R^d,w_1;X), &  1\leq q_0,q_1\leq \infty;
\\ B^{s_0}_{p_0,p_1}(\R^d,w_0;X)  & \hookrightarrow F^{s_1}_{p_1,q}(\R^d,w_1;X), &  p_0< p_1, \quad 1\leq q\leq \infty;
\\ F^{s_0}_{p_0,q}(\R^d,w_0;X)  & \hookrightarrow B^{s_1}_{p_1,p_0}(\R^d,w_1;X), &   p_0< p_1, \quad 1\leq q\leq \infty;
\\ H^{s_0,p_0}(\R^d,w_0;X) & \hookrightarrow H^{s_1,p_1}(\R^d,w_1;X);
\\ W^{s_0,p_0}(\R^d,w_0;X) & \hookrightarrow W^{s_1,p_1}(\R^d,w_1;X), &  s_0 > s_1\geq 0;
\\ H^{s_0,p_0}(\R^d,w_0;X) & \hookrightarrow W^{s_1,p_1}(\R^d,w_1;X), &  s_0 > s_1\geq 0;
\\ W^{s_0,p_0}(\R^d,w_0;X) & \hookrightarrow H^{s_1,p_1}(\R^d,w_1;X), &  s_0 > 0
\end{align*}
if and only if \eqref{eq:condparam} is satisfied.
\end{theorem}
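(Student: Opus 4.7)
The plan is to reduce all eight embeddings to vector-valued analogues of Proposition~\ref{prop:Besovch} and Theorem~\ref{thm:main2}, together with the scalar identifications of $H$- and $W$-spaces with members of the Triebel--Lizorkin / Besov scale in the $A_p$ setting. For necessity I would fix an arbitrary unit vector $x_0\in X$ and observe that $f\mapsto f\otimes x_0$ is an isometric embedding of each scalar space into its $X$-valued counterpart. Hence every embedding in Theorem~\ref{thm:main3} implies its scalar analogue, and Proposition~\ref{prop:Besovch} (applied with $p_0\le p_1$ and $q_0\le q_1$, so $p_*=q_*=\infty$), Theorem~\ref{thm:main2}, together with the scalar identifications $H^{s,p}(w)=F^s_{p,2}(w)$ and $W^{s,p}(w)=F^s_{p,2}(w)$ if $s\in\N_0$ or $B^s_{p,p}(w)$ if $s\notin\N_0$ (all valid for $w\in A_p$, $p>1$), force condition~\eqref{eq:condparam}.

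For sufficiency I would first upgrade Proposition~\ref{prop:Besovch} and Theorem~\ref{thm:main2} to the $X$-valued setting. The proof of Proposition~\ref{prop:Besovch} in \cite{HS08} rests on wavelet characterizations and the discrete sequence-space embeddings of \cite{KLSS}; these estimates only involve scalar arithmetic applied to the moduli $|c_{\nu,m}|$ of the wavelet coefficients, so replacing $|c_{\nu,m}|$ by $\|c_{\nu,m}\|_X$ throughout extends the whole argument to any Banach space $X$. The same replacement works in the weighted Gagliardo--Nirenberg inequality that Section~\ref{sec:proof-F} will use to derive Theorem~\ref{thm:main2} from Proposition~\ref{prop:Besovch}. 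This takes care of the first two embeddings; the two Jawerth--Franke type embeddings then follow from the elementary monotonicities $F^{s_1}_{p_1,1}(w_1;X)\hookrightarrow F^{s_1}_{p_1,q}(w_1;X)$ and $F^{s_0}_{p_0,q}(w_0;X)\hookrightarrow F^{s_0}_{p_0,\infty}(w_0;X)$ combined with the vector-valued version of the classical weighted Jawerth--Franke atomic estimate in the two endpoint cases $q=1$ and $q=\infty$.

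For the embeddings involving $H$ and $W$ I would use the scalar identifications above, together with Theorem~\ref{thm:main2}, to settle the scalar case, and pass to the $X$-valued case by writing the target norm through the Bessel potential $(1-\Delta)^{(s_1-s_0)/2}\colon L^{p_0}(w_0;X)\to L^{p_1}(w_1;X)$ (for $H$) or through difference quotients (for non-integer $W$) and using positivity of the associated kernels to dominate $\|(1-\Delta)^{(s_1-s_0)/2}f(x)\|_X$ pointwise by a scalar convolution of $\|f(\cdot)\|_X$. Integer-order $W$ and the mixed $H\leftrightarrow W$ embeddings then come from combining these cases with $H^{s,p}(w;X)\hookrightarrow W^{s,p}(w;X)$ for $s\in\N_0$ and its converse up to an arbitrarily small loss of smoothness.

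The main obstacle I expect is precisely the vector-valued $H$- and mixed $H/W$ embeddings in the absence of a UMD assumption on $X$: the Littlewood--Paley identification $H^{s,p}(w;X)=F^s_{p,2}(w;X)$ is not available in general, so the reduction to the $F$-scale is indirect and must proceed through sharp pointwise kernel estimates for Bessel potentials and difference quotients before scalar Theorem~\ref{thm:main2} can be invoked.
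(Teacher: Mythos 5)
Your necessity argument (tensoring with a fixed unit vector to reduce to the scalar case, then invoking Proposition~\ref{prop:Besovch} and Theorem~\ref{thm:main2}) matches the paper's, and for the $H$-embeddings you have independently found the paper's key trick: the scalar two-weight bound for $(1-\Delta)^{-(s_0-s_1)/2}$ transfers to $L^{p_0}(w_0;X)\to L^{p_1}(w_1;X)$ because the operator has a positive kernel. However, your sufficiency argument for the $B$- and $F$-embeddings has a genuine gap. You propose to upgrade Proposition~\ref{prop:Besovch} and Theorem~\ref{thm:main2} to arbitrary $X$ by rerunning the wavelet proofs of Haroske--Skrzypczak and K\"uhn--Leopold--Sickel--Skrzypczak with $|c_{\nu,m}|$ replaced by $\|c_{\nu,m}\|_X$. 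That substitution is not a formality: one must first establish wavelet (or local mean) characterizations of the weighted vector-valued spaces $B^s_{p,q}(\R^d,w;X)$ and $F^s_{p,q}(\R^d,w;X)$ for a general Banach space $X$, and for the $F$-scale this involves vector-valued maximal function estimates whose availability and precise form must be verified. The authors explicitly flag exactly this step as plausible but ``quite some work to check'' and design the proof to avoid it. Their route is: prove everything in the scalar case first; transfer only the single $L^{p_0}(w_0)\to L^{p_1}(w_1)$ estimate to the Bochner setting via positivity; deduce the vector-valued $B$-embedding by real interpolation of the $H$-embedding; and then repeat the (already vector-valued) Gagliardo--Nirenberg argument of Proposition~\ref{prop:interpolationineq} to obtain the $F$-embedding. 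Unless you can actually supply the vector-valued wavelet theory, your plan does not close.

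Two smaller points. For the Jawerth--Franke embeddings you appeal to an unspecified ``vector-valued weighted Jawerth--Franke atomic estimate''; no such input is needed --- the paper's proof of Theorem~\ref{thm:sobolevJawerth} is purely by real interpolation with carefully chosen auxiliary weights $v_0,v_1$, and once the vector-valued $F$- and $B$-embeddings are in place it repeats verbatim. For the $W$-spaces, difference quotients are a detour: for $s\notin\N_0$ the space $W^{s,p}(\R^d,w;X)$ is \emph{defined} as $F^s_{p,p}(\R^d,w;X)$, and for integer $s$ the sandwich \eqref{eq:FW} (valid for arbitrary $X$) reduces all four $H$/$W$ embeddings to the single vector-valued embedding $F^{s_0}_{p_0,\infty}(\R^d,w_0;X)\hookrightarrow F^{s_1}_{p_1,1}(\R^d,w_1;X)$; note also that your proposed identity $H^{s,p}(w;X)=W^{s,p}(w;X)$ for $s\in\N_0$ requires $X$ to be UMD and so cannot be used here.
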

The third and fourth embeddings, due to Jawerth and Franke \cite{Franke86, Jaw77} in the unweighted setting, are based on real interpolation. They provide partial improvements of the embeddings for $F$-spaces from Theorem \ref{thm:main2}, in view of
\[F^{s_0}_{p_0,p_0} \hookrightarrow  B^{s_0}_{p_0,p_1}, \qquad  B^{s_1}_{p_1,p_0}\hookrightarrow  F^{s_1}_{p_1,p_1}. \]

We further record a consequence for the boundedness of the Bessel-potential operators, defined by Fourier transform, in the presence of weights. This gives a condition which might be easier to verify than the one in \cite{Rak97}.
\begin{corollary}\label{cor:Bessel} Let $X$ be a Banach space, $s>0$, $1<p_0\leq p_1<\infty$, $w_0\in A_{p_0}$ and $w_1\in A_{p_1}$. Then the operator $(1-\Delta)^{-s/2}$ is bounded from $L^{p_0}(\R^d,w_0;X)$ to  $L^{p_1}(\R^d,w_1;X)$ if and only if \eqref{eq:condparam} is satisfied.
\end{corollary}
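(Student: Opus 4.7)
The plan is to reduce the operator-boundedness statement to the embedding for Bessel-potential spaces already listed in Theorem \ref{thm:main3}. By the very definition of the weighted (vector-valued) Bessel-potential spaces, the operator $(1-\Delta)^{s/2}$ realises an isometric isomorphism between $H^{s,p}(\R^d,w;X)$ and $L^p(\R^d,w;X) = H^{0,p}(\R^d,w;X)$, hence $(1-\Delta)^{-s/2}$ is an isomorphism from $L^{p_0}(\R^d,w_0;X)$ onto $H^{s,p_0}(\R^d,w_0;X)$. Consequently, the boundedness of
\[(1-\Delta)^{-s/2}\colon L^{p_0}(\R^d,w_0;X)\to L^{p_1}(\R^d,w_1;X)\]
is equivalent, modulo this isomorphism, to the continuous embedding
\[H^{s,p_0}(\R^d,w_0;X)\hookrightarrow H^{0,p_1}(\R^d,w_1;X).\]

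With this reformulation, I would simply invoke the fifth item in Theorem \ref{thm:main3} with $s_0=s>0=s_1$. The hypotheses $1<p_0\leq p_1<\infty$, $w_0\in A_{p_0}$, $w_1\in A_{p_1}$ are exactly those assumed in the corollary, and the condition $s_0>s_1$ is guaranteed by $s>0$. Therefore the embedding above, and hence the claimed boundedness, holds if and only if \eqref{eq:condparam} is satisfied.

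There is no genuine obstacle here beyond making sure that the isomorphism $(1-\Delta)^{-s/2}\colon L^{p}(\R^d,w;X)\to H^{s,p}(\R^d,w;X)$ is available in the weighted vector-valued setting; this is a definitional/folklore fact in the framework of the paper (see the discussion in Section~\ref{sec:def} and the references therein), so the corollary follows as a direct consequence of Theorem \ref{thm:main3}.
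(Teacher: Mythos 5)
Your proposal is correct and matches the paper's intended argument: the corollary is stated as a direct consequence of Theorem \ref{thm:main3}, and the paper itself (in the proof of that theorem) uses exactly the equivalence you invoke, namely that $H^{s_0,p_0}(\R^d,w_0;X)\hookrightarrow H^{s_1,p_1}(\R^d,w_1;X)$ is the same as boundedness of $(1-\Delta)^{-(s_0-s_1)/2}$ from $L^{p_0}(\R^d,w_0;X)$ to $L^{p_1}(\R^d,w_1;X)$, applied here with $s_0=s$, $s_1=0$ and $H^{0,p_1}=L^{p_1}$.
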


In general, two-weight characterizations of bounded of linear operators can be difficult. Results in this direction for several standard operators (maximal operator, Hilbert Transform, Riesz potential, Bessel potential) can be found in \cite{CUMO2011, hytonen2013two, laceyI, LaceyWick, Sawyer82, sawyer2013geometric} and references therein. If one restricts to the class of Muckenhoupt weights, then characterizations are sometimes easier to obtain.

\emph{This paper is organized as follows.} In Section \ref{sec:def} we introduce the function spaces and state their basic properties. The Theorems \ref{thm:main2} and \ref{thm:main3} are proved in Section \ref{sec:proof}. The validity of the condition \eqref{eq:condparam} for some types of power weights is considered in Section \ref{sec:ex}.

\section{Function spaces}\label{sec:def}
We summarize the basic properties of the function spaces under consideration. For details we refer to \cite{Bui82, BPT96, HS08, MeyVer1, SSS12, Tri83}.

A locally integrable, almost everywhere positive function $w:\R^d\to [0,\infty)$ belongs to the Muckenhoupt class $A_p$ for some $1<p<\infty$, if
\begin{equation}\label{eq:Ap}
[w]_{A_p} = \sup_{Q\text{ cubes in }\R^d} \left (\frac{1}{|Q|} \int_Q w(x)\,dx\right) \left (\frac{1}{|Q|} \int_Q w(x)^{-\frac{1}{p-1}}\,dx\right)^{p-1} < \infty.
\end{equation}
One further defines $A_\infty = \bigcup_{1<p< \infty} A_p$. The Muckenhoupt classes are increasing in the sense that  $A_{p_0}\subset A_{p_1}$ for $p_0<p_1$. For a Banach space $X$, $0<p<\infty$ and $w\in A_\infty$ one sets
$$\|f\|_{L^p(\R^d,w;X)} = \left ( \int_{\R^d} \|f(x)\|_X^p w(x) \,dx\right)^{1/p}.$$
This defines the spaces $L^p(\R^d,w;X)$, which are Banach spaces for $p\geq 1$ and quasi-Banach spaces for $p<1$.

Let $\Schw(\R^d;X)$ be space of $X$-valued Schwartz functions, let $\TD(\R^d;X)$ be the space of $X$-valued distributions, and let $\wh{f} = \F f$ be the Fourier transform of $f\in \TD(\R^d;X)$. Choose a sequence $(\varphi_k)_{k\geq 0} \subset \Schw(\R^d)$ such that
\begin{align*}
\wh{\varphi}_0 = \wh{\varphi}, \qquad \wh{\varphi}_1(\xi) = \wh{\varphi}(\xi/2) - \wh{\varphi}(\xi), \qquad \wh{\varphi}_k(\xi) = \wh{\varphi}_1(2^{-k+1} \xi), \quad k\geq 2, \qquad \xi\in \R^d,
\end{align*}
with a generator function $\varphi$ of the form
\begin{equation*}
 0\leq \wh{\varphi}(\xi)\leq 1,\quad \xi\in \R^d, \qquad  \wh{\varphi}(\xi) = 1 \ \text{ if } \ |\xi|\leq 1, \qquad  \wh{\varphi}(\xi)=0 \ \text{ if } \ |\xi|\geq \frac32.
\end{equation*}
Then equivalent (quasi-)norms for the Besov space $B$ and the Triebel-Lizorkin space $F$ are for $s\in \R$, $0<p<\infty$, $0<q\leq \infty$,   $w\in A_\infty$ and $f\in {\mathscr S}'(\R^d;X)$ given by
\[ \|f\|_{B_{p,q}^s (\R^d,w;X)} = \Big\| \big( 2^{sk}\varphi_k * f\big)_{k\geq 0} \Big\|_{\ell^q(L^p(\R^d,w;X))},\]
\[ \|f\|_{F_{p,q}^s (\R^d,w;X)} = \Big\| \big( 2^{sk}\varphi_k *f\big)_{k\ge 0} \Big\|_{L^p(\R^d,w;\ell^q(X))}. \]
The $B$-spaces are defined in the same way also for $p=\infty$, the $F$-spaces require some more care in this case.
For $s\in \R$, $1<p<\infty$ and $w\in A_p$ the norm of the Bessel-potential space $H$ is given by
\[\|f\|_{H^{s,p}(\R^d,w;X)} = \|\F^{-1} [(1+|\cdot|^2)^{s/2} \wh{f} ]\|_{L^p(\R^d,w;X)}.\]
For $m\in \N_0$, $1<p<\infty$ and $w\in A_p$ the norm of the classical Sobolev space $W$ is
\[\|f\|_{W^{m,p}(\R^d,w;X)} = \Big( \sum_{|\alpha|\leq m} \|D^{\alpha} f\|_{L^p(\R^d,w;X)}^p\Big)^{1/p}.\]
For $s\geq 0$ one  defines the Sobolev-Slobodetskii scale by
$$W^{s,p}(\R^d,w;X) = \left \{\begin{array}{ll} W^{m,p}(\R^d,w;X), & s=m\in \N_0, \\ B_{p,p}^s(\R^d,w;X) = F_{p,p}^s(\R^d,w;X), & s\notin \N_0.\end{array}\right.$$
In the scalar case we write $L^p(\R^d,w) = L^p(\R^d,w;\C)$, and so on.

We note some basic relations. Monotonicity of the $\ell^q$ spaces implies that for $0<q_0\leq q_1\leq \infty$
\begin{equation}\label{eq:basic1}
B_{p,q_0}^s (\R^d,w;X)\hookrightarrow B_{p,q_1}^s (\R^d,w;X), \qquad F_{p,q_0}^s (\R^d,w;X)\hookrightarrow F_{p,q_1}^s (\R^d,w;X).
\end{equation}
For $\varepsilon>0$ and arbitrary $0<q_0, q_1\leq \infty$ one has
\begin{equation}\label{eq:basic2}
B_{p,q_0}^{s+\varepsilon} (\R^d,w;X)\hookrightarrow B_{p,q_1}^s (\R^d,w;X), \qquad F_{p,q_0}^{s+\varepsilon} (\R^d,w;X)\hookrightarrow F_{p,q_1}^s (\R^d,w;X),
\end{equation}
and for $0<p<\infty$, by Minkowski's inequality,
\begin{equation}\label{eq:BFB}
B_{p,\min\{p,q\}}^{s} (\R^d,w;X)\hookrightarrow F_{p,q}^{s} (\R^d,w;X) \hookrightarrow B_{p,\max\{p,q\}}^{s} (\R^d,w;X).
\end{equation}
The $H$-spaces are related to the $F$-spaces as follows. In the scalar case one has
\begin{equation}\label{eq:F=H}
H^{s,p}(\R^d,w) = F^{s}_{p,2}(\R^d,w), \qquad 1<p<\infty, \quad w\in A_p.
\end{equation}
In the vector-valued case this is true if and only if $X$ can be renormed as a Hilbert space. One further has $W^{m,p}(\R^d;X)= H^{m,p}(\R^d;X)$ for  $m\in \N$ if and only if $X$ is a UMD Banach space. For general Banach spaces $X$ one still has
\begin{equation}\label{eq:FH}
F^{s}_{p,1}(\R^d,w;X) \hookrightarrow H^{s,p}(\R^d,w;X) \hookrightarrow F^{s}_{p,\infty}(\R^d,w;X),\qquad 1<p<\infty, \quad w\in A_p.
\end{equation}
\begin{equation}\label{eq:FW}
F^{s}_{p,1}(\R^d,w;X) \hookrightarrow W^{s,p}(\R^d,w;X) \hookrightarrow F^{s}_{p,\infty}(\R^d,w;X),\quad s\geq0,\quad 1<p<\infty, \quad w\in A_p.
\end{equation}

For the latter embeddings in \eqref{eq:FH} and \eqref{eq:FW} to hold, a \emph{local} $A_p$-condition is in fact necessary, see \cite[Remark 3.13]{MeyVer1} and \cite{Ry01}. The embeddings  can be improved with respect to the microscopic parameters if one takes into account type and cotype of $X$, see \cite[Proposition 3.1]{Veraar11} and \cite[Proposition 5.8]{MeyVer3}.

\section{Proofs of Theorems \ref{thm:main2} and \ref{thm:main3}} \label{sec:proof}
After a few preparations on products of weights, in Section \ref{sec:proof-F} we prove Theorem \ref{thm:main2}. The scalar version of the Jawerth-Franke type embeddings from Theorem \ref{thm:main3} is derived in Section \ref{sec:proof-mixed}. The remaining assertions and the vector-valued version of Theorem \ref{thm:main3} are shown in Section \ref{sec:vector}.

\subsection{Preparations} The following assertions on products of Muckenhoupt weights will be needed.
\begin{lemma}\label{lem:stable} Let $1<p<\infty$ and $w_0, w_1\in A_{p}$. Then there is $\eta_0 >0$ such that for all  $\varepsilon, \delta\in [0, \eta_0)$ one has  $w_0^{-\varepsilon} w_1^{1+\delta}\in A_{p}$.
\end{lemma}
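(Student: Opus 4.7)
The plan is to estimate the $A_p$ quantity in \eqref{eq:Ap} for $v := w_0^{-\varepsilon} w_1^{1+\delta}$ directly, using Hölder's inequality together with reverse Hölder estimates for $w_i$ and $w_i^{-1/(p-1)}$. Since $w_i \in A_p$ and, by duality, $w_i^{-1/(p-1)} \in A_{p'}$, the self-improvement (open-endedness) property of Muckenhoupt classes provides constants $s, r > 1$ and $C > 0$ such that
\[\Big(\frac{1}{|Q|}\int_Q w_i^{s}\Big)^{1/s} \leq C\, \frac{1}{|Q|}\int_Q w_i, \qquad \Big(\frac{1}{|Q|}\int_Q w_i^{-r/(p-1)}\Big)^{1/r} \leq C\,\frac{1}{|Q|}\int_Q w_i^{-1/(p-1)}\]
uniformly in cubes $Q$, for $i=0,1$.

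I would then fix conjugate pairs $(\alpha,\alpha')$, $(\beta,\beta')$ with $\alpha', \beta' \in (1, \min\{s,r\})$ close to $1$, and split the two averages in \eqref{eq:Ap} via Hölder to separate the powers of $w_0$ from those of $w_1$:
\begin{align*}
\frac{1}{|Q|}\int_Q w_0^{-\varepsilon}w_1^{1+\delta} &\leq \Big(\frac{1}{|Q|}\int_Q w_0^{-\varepsilon\alpha}\Big)^{1/\alpha}\Big(\frac{1}{|Q|}\int_Q w_1^{(1+\delta)\alpha'}\Big)^{1/\alpha'},\\
\frac{1}{|Q|}\int_Q w_0^{\varepsilon/(p-1)} w_1^{-(1+\delta)/(p-1)} &\leq \Big(\frac{1}{|Q|}\int_Q w_0^{\varepsilon\beta/(p-1)}\Big)^{1/\beta}\Big(\frac{1}{|Q|}\int_Q w_1^{-(1+\delta)\beta'/(p-1)}\Big)^{1/\beta'}.
\end{align*}
To each of the four resulting averages I would apply either reverse Hölder (for the two $w_1$-factors, whose exponents $(1+\delta)\alpha'$ and $(1+\delta)\beta'$ stay in $[1, s]$ and $[1,r]$ when $\delta$ is small enough) or Jensen's inequality (for the two $w_0$-factors, whose small positive exponents stay in $(0,1)$ when $\varepsilon$ is small). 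A short rearrangement then produces
\[[w_0^{-\varepsilon} w_1^{1+\delta}]_{A_p} \leq C\,[w_0]_{A_p}^{\varepsilon}\,[w_1]_{A_p}^{1+\delta} < \infty,\]
and the claim follows upon choosing $\eta_0$ smaller than all the smallness thresholds imposed on $\varepsilon$ and $\delta$.

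The only delicate point is the compatibility of the four Hölder exponents: taking $\alpha', \beta'$ near $1$ to accommodate the near-unit positive powers of $w_1$ forces $\alpha, \beta$ to be large, which in turn restricts $\varepsilon$ so that $\varepsilon\alpha(p-1) < 1$ and $\varepsilon\beta < p-1$. These are, however, merely finite upper bounds depending only on $s, r, p$, so the argument goes through with a uniform $\eta_0 > 0$.
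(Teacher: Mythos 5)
Your argument is correct and is in substance the same as the paper's: the paper assembles the lemma from cited closure properties of $A_p$ (duality $w^{-1/(p-1)}\in A_{p'}$, stability under powers in $[0,1]$, the self-improvement $w^{1+\gamma}\in A_p$, and the product rule $u^{1-t}v^{t}\in A_p$), and your Hölder/reverse-Hölder/Jensen computation is exactly these facts unpacked, with the explicit bound $[w_0^{-\varepsilon}w_1^{1+\delta}]_{A_p}\leq C[w_0]_{A_p}^{\varepsilon}[w_1]_{A_p}^{1+\delta}$ as a small bonus. One clarification: the factor $w_0^{-\varepsilon\alpha}$ carries a \emph{negative} exponent, so Jensen cannot be applied to $w_0$ directly (the power function is then convex); you must write $w_0^{-\varepsilon\alpha}=(w_0^{-1/(p-1)})^{\varepsilon\alpha(p-1)}$, apply Jensen to the locally integrable weight $w_0^{-1/(p-1)}$ under your stated constraint $\varepsilon\alpha(p-1)<1$, and then invoke the $A_p$ condition for $w_0$ to recover $(\frac{1}{|Q|}\int_Q w_0)^{-\varepsilon}$ — your constraints show this is what you intend, but the phrase ``small positive exponents'' should be corrected accordingly.
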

\begin{proof} By \cite[Proposition 9.1.5]{GraModern} one has $w_0^{-1/(p-1)}\in A_{p'}$, such that $w_0^{-\tau_0}\in A_p$ for  sufficiently small $\tau_0 > 0$ by \cite[Exercise 9.1.3]{GraModern}. Furthermore, \cite[Theorem 9.2.5]{GraModern} shows that there is $\gamma_0>0$ such that $w_1^{1+\gamma_0}\in A_p$. Hence, again by \cite[Exercise 9.1.3]{GraModern}, we have $w_0^{-\tau_0\theta}, w_1^{(1+\gamma_0)\lambda}\in  A_p$ for all $\theta,\lambda\in [0,1]$. Using \cite[Exercise 9.1.10]{GraModern}, this implies
$$w_0^{-\tau_0 \theta (1-t)} w_1^{(1+\gamma_0)\lambda t} \in A_p,\qquad \theta,\lambda, t\in [0,1].$$
Letting $t=\frac{2}{2+\gamma_0}$ and $\sigma_0 = \frac{\gamma_0}{2+\gamma_0}$, this gives $w_0^{-\tau_0 \sigma_0 \theta } w_1^{(1+\sigma_0)\lambda } \in A_p$ for all $\theta,\lambda\in [0,1]$, which implies the assertion.
\end{proof}

\begin{lemma}\label{lem:reverse}
Let $w_0,w_1\in A_\infty$. Then there are $\eta_0 > 0$ and a constant $C>0$ such that for all $\e,\delta \in (0,\eta_0)$ and all $d$-dimensional cubes $Q\subset \R^d$ we have $$\int_Q w_0^{-\e} w_1^{1+\delta}\,dx \leq C |Q|^{\e-\delta}\left (\int_Q w_0\,dx\right)^{-\e} \left( \int_Q w_1\,dx\right)^{1+\delta}.$$
\end{lemma}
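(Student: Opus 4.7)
The plan is to bound $\int_Q w_0^{-\varepsilon} w_1^{1+\delta}\,dx$ by Hölder's inequality and then treat the two resulting factors with tools tailored to $A_\infty$ weights: an $A_p$-type lower bound for the negative power of $w_0$, and the reverse Hölder inequality for $w_1$. The power of $|Q|$ picked up from the two factors has to add up to exactly $\varepsilon-\delta$, so the choice of parameters must be coordinated.

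More precisely, first fix $p<\infty$ with $w_0\in A_p$; then $\eta_1:=1/(p-1)$ and the $A_p$-condition yield
\[
\frac{1}{|Q|}\int_Q w_0^{-\eta_1}\,dx \;\leq\; [w_0]_{A_p}^{\eta_1}\left(\frac{1}{|Q|}\int_Q w_0\,dx\right)^{-\eta_1}.
\]
By Jensen's inequality applied to the concave function $t\mapsto t^{\tau/\eta_1}$, this bound propagates to every exponent $\tau\in(0,\eta_1]$, giving
\[
\int_Q w_0^{-\tau}\,dx \;\leq\; C\,|Q|^{1+\tau}\left(\int_Q w_0\,dx\right)^{-\tau}, \qquad 0<\tau\leq \eta_1.
\]
On the other side, since $w_1\in A_\infty$, the reverse Hölder inequality provides $\sigma>1$ and a constant such that $\left(|Q|^{-1}\int_Q w_1^\sigma\right)^{1/\sigma}\le C|Q|^{-1}\int_Q w_1$, and Jensen again extends this to every exponent $s\in[1,\sigma]$, i.e.
\[
\int_Q w_1^s\,dx \;\leq\; C\,|Q|^{1-s}\left(\int_Q w_1\,dx\right)^{s}, \qquad 1\leq s\leq \sigma.
\]

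Now I would pick conjugate exponents $r,r'>1$ with $r'$ strictly less than $\sigma$ (for instance $r'=(1+\sigma)/2$), and then set
\[
\eta_0 \;:=\; \min\!\left\{\frac{\eta_1}{r},\;\frac{\sigma-r'}{r'}\right\},
\]
so that for $\varepsilon,\delta\in(0,\eta_0)$ one has $\varepsilon r\leq \eta_1$ and $(1+\delta)r'\leq \sigma$. Hölder then gives
\[
\int_Q w_0^{-\varepsilon}w_1^{1+\delta}\,dx \;\leq\; \left(\int_Q w_0^{-\varepsilon r}\,dx\right)^{1/r}\!\left(\int_Q w_1^{(1+\delta)r'}\,dx\right)^{1/r'},
\]
and the two displayed inequalities apply to the respective factors. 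Multiplying out, the exponent of $|Q|$ is
\[
\frac{1+\varepsilon r}{r}+\frac{1-(1+\delta)r'}{r'} \;=\; \Big(\tfrac1r+\tfrac{1}{r'}\Big)+\varepsilon-\delta-1 \;=\; \varepsilon-\delta,
\]
which is exactly what is required; the $(\int_Q w_i)$ factors come out with the correct exponents automatically.

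The only subtle point is the coordinated choice of $r,r'$ and $\eta_0$: one needs $r'$ large enough so that $(1+\delta)r'$ still fits under the reverse Hölder exponent $\sigma$ for $w_1$, and simultaneously $r=r'/(r'-1)$ small enough (hence $r'$ not too close to $1$) so that $\varepsilon r$ fits under $\eta_1$. This is handled painlessly by choosing $r'$ strictly between $1$ and $\sigma$, and is really the only place where the inequality could fail; everything else reduces to Hölder, Jensen, and the two standard characterizations of $A_\infty$ weights recalled above.
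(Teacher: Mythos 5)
Your proof is correct and follows essentially the same route as the paper's: H\"older's inequality to split the product, the $A_p$ condition to control the negative power of $w_0$, and the reverse H\"older inequality to control the power of $w_1$, with the exponent of $|Q|$ coming out as $\varepsilon-\delta$ by conjugacy. The only (cosmetic) difference is that you fix the conjugate exponents $r,r'$ once and use Jensen to interpolate down to the exponents $\varepsilon r$ and $(1+\delta)r'$, whereas the paper takes $\varepsilon$-dependent H\"older exponents $q=1/(\varepsilon(p-1))$ so that the $w_0$-factor lands exactly on the power $w_0^{-1/(p-1)}$ appearing in the $A_p$ condition.
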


\begin{proof} Fix $p > 1$ such that $w_0,w_1\in A_p$. Choose $\eta_0 > 0$ such that $\frac{1}{\eta_0(p-1)} >1$, and let $\e\in (0,\eta_0)$. Then for all $\delta > 0$, H\"older's inequality with $q =  \frac{1}{\e(p-1)}$ and $q' = \frac{1}{1-\e(p-1)}$ yields
\begin{equation}\label{reverse-1}
\int_Q w_0^{-\e} w_1^{1+\delta}\,dx \leq \left (\int_Q w_0^{-\frac{1}{p-1}} \,dx\right)^{\e(p-1)} \left (\int_Q w_1^{\frac{1+\delta}{1-\e(p-1)}}\,dx\right)^{1-\e(p-1)}.
\end{equation}
By definition of $[w_0]_{A_p}$,
\begin{equation}\label{reverse-2}
\left (\int_Q w_0^{-\frac{1}{p-1}} \,dx\right)^{\e(p-1)} \leq [w_0]_{A_p}^\e |Q|^{\e p} \left (\int_Q w_0 \,dx\right)^{-\e}.
\end{equation}
The reverse H\"older inequality \cite[Theorem 9.2.2]{GraModern} implies that there are $\gamma_0 > 0$ and $C>0$, independent of $Q$, such that for all $\gamma\in (0,\gamma_0]$ we have
$$\int_Q w_1^{1+\gamma}\,dx \leq C |Q|^{-\gamma} \left ( \int_Q w_1\,dx\right)^{1+\gamma}.$$
Decreasing $\eta_0$ if necessary, we may assume that $\frac{1+\eta_0}{1-\eta_0(p-1)} \leq 1+\gamma_0$. For $\e, \delta\in (0,\eta_0)$ we then obtain
\begin{equation}\label{reverse-3}
\left (\int_Q w_1^{\frac{1+\delta}{1-\e(p-1)}}\,dx\right)^{1-\e(p-1)}\leq C |Q|^{-\e(p-1)-\delta} \left (\int_Q w_1\,dx\right)^{1+\delta}.
\end{equation}
Applying the estimates \eqref{reverse-2} and \eqref{reverse-3} in \eqref{reverse-1}, the assertion follows.
\end{proof}

The independence of the microscopic parameters in Theorem \ref{thm:main2} is essentially a consequence of the following Gagliardo-Nirenberg type inequality, see \cite{BM01, Oru, SchmSi05} for the unweighted setting and \cite[Proposition 5.1]{MeyVer1} for the weighted setting.
\begin{proposition}\label{prop:interpolationineq}
Let $X$ be a Banach space, $s_0 > s_1$, $0<p_0,p_1 <\infty$ and $w_0,w_1\in A_\infty$. Given $\theta\in (0,1)$, let $s_1 < s<s_0$, $0<p<\infty$ and $w\in A_\infty$ be defined by
\[s = (1-\theta) s_0 + \theta s_1, \qquad \frac{1}{p} =  \frac{1-\theta}{p_0} + \frac{\theta}{p_1}, \qquad w= w_0^{(1-\theta)p/p_0} w_1^{\theta p/p_1}.\]
Then for arbitrary $0< q, q_0, q_1\leq \infty$ there is a constant $C> 0$ such that for all $f\in \TD(\R^d;X)$ one has
\[\|f\|_{F^{s}_{p,q}(\R^d,w;X)} \leq C \|f\|_{F^{s_0}_{p_0,q_0}(\R^d, w_0;X)}^{1-\theta} \|f\|_{F^{s_1}_{p_1,q_1}(\R^d, w_1;X)}^\theta.\]
\end{proposition}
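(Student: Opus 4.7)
I would prove this via a pointwise dyadic-splitting argument on the Littlewood--Paley blocks, followed by a weighted H\"older inequality tailored to the factorization of $w$.

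\textbf{Step 1 (pointwise multiplicative estimate).} Fix $x\in\R^d$ and set $a_k(x) = \|(\varphi_k\ast f)(x)\|_X$ for $k\ge 0$. Let
\[
M_0(x) = \sup_{k\ge 0} 2^{s_0 k} a_k(x), \qquad M_1(x) = \sup_{k\ge 0} 2^{s_1 k} a_k(x).
\]
Then for every $k\ge 0$ one has the two trivial upper bounds
\[
2^{sk} a_k(x) \le \min\bigl\{2^{(s-s_0)k} M_0(x),\; 2^{(s-s_1)k} M_1(x)\bigr\}.
\]
Since $s_1<s<s_0$, the first bound decays geometrically and the second grows geometrically in $k$. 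Using the first for indices above the crossing point $k_*=(s_0-s_1)^{-1}\log_2(M_0/M_1)$ and the second below (with the obvious reading if $k_*\le 0$), I sum (or take the supremum, if $q=\infty$) two geometric series and obtain
\[
\bigl\|(2^{sk} a_k(x))_{k\ge 0}\bigr\|_{\ell^q} \le C\, M_0(x)^{1-\theta} M_1(x)^{\theta},
\]
where $\theta = (s_0-s)/(s_0-s_1)$, matching $s=(1-\theta)s_0+\theta s_1$, and $C$ depends only on $q,s,s_0,s_1$. This step is where $s_0>s_1$ is used essentially, and it is the computational core of the proof.

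\textbf{Step 2 (microscopic parameters cost nothing).} Since $\ell^{q_i}\hookrightarrow \ell^\infty$ for every $q_i\in(0,\infty]$, I estimate
\[
M_i(x) \le \bigl\|(2^{s_i k} a_k(x))_{k\ge 0}\bigr\|_{\ell^{q_i}}, \qquad i=0,1.
\]
This is precisely where the independence of $q,q_0,q_1$ enters, and mirrors the analogous feature of Theorem~\ref{thm:main2}.

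\textbf{Step 3 (weighted H\"older).} The definitions of $p$ and $w$ are arranged exactly so that $\frac{(1-\theta)p}{p_0}+\frac{\theta p}{p_1}=1$ and $w = w_0^{(1-\theta)p/p_0} w_1^{\theta p/p_1}$. Raising the Step~1 bound to the $p$-th power, integrating against $w$, splitting $w$ as a product and applying H\"older's inequality with conjugate exponents $p_0/((1-\theta)p)$ and $p_1/(\theta p)$ gives
\[
\int_{\R^d} \!\bigl\|(2^{sk} a_k(x))_k\bigr\|_{\ell^q}^{p}\, w(x)\,dx \le C^{p} \Bigl(\int M_0^{p_0} w_0\Bigr)^{(1-\theta)p/p_0} \!\Bigl(\int M_1^{p_1} w_1\Bigr)^{\theta p/p_1}.
\]
Combining with Step~2 and taking $p$-th roots yields the claimed inequality.

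The only non-routine step is the pointwise splitting in Step~1: one has to verify that summing the two geometric tails around $k_*$ produces exactly the multiplicative bound $M_0^{1-\theta}M_1^{\theta}$ with exponents matching the H\"older pairing in Step~3. Once this is in place, Steps 2 and 3 are essentially bookkeeping built into the choice of $w$.
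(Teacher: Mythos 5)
Your argument is correct, and it is essentially the standard proof of this Gagliardo--Nirenberg inequality: the paper itself gives no proof but defers to \cite[Proposition 5.1]{MeyVer1}, whose argument is the same pointwise splitting of the Littlewood--Paley blocks at the crossing index $k_*$ (yielding $\|(2^{sk}a_k(x))_k\|_{\ell^q}\lesssim M_0(x)^{1-\theta}M_1(x)^{\theta}$), followed by the embedding $\ell^{q_i}\hookrightarrow\ell^\infty$ and H\"older's inequality with the conjugate exponents $p_0/((1-\theta)p)$ and $p_1/(\theta p)$ built into the definition of $w$. Since your proof uses no maximal-function or $A_\infty$ input and works verbatim for all $0<q,q_0,q_1\leq\infty$ and $0<p_0,p_1<\infty$, it also substantiates the authors' remark that the case $0<\min\{p,p_0,p_1,q,q_0,q_1\}\leq 1$, omitted in \cite{MeyVer1}, is covered by the same argument.
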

Note that in \cite{MeyVer1} the case $0<\min\{p, p_0, p_1, q, q_0, q_1\}\leq 1$ has not been considered. However, the proof extends to this case.

\subsection{Proof of Theorem \ref{thm:main2}}\label{sec:proof-F} \emph{Necessity.} Assume $F^{s_0}_{p_0, q_0}(\R^d,w_0)\hookrightarrow F^{s_1}_{p_1, q_1}(\R^d,w_1)$. Then \eqref{eq:BFB} implies that for $r_0 = \min\{p_0, q_0\}$ and $r_1 = \max\{p_1, q_1\}$ we have
\[B^{s_0}_{p_0, r_0}(\R^d,w_0)\hookrightarrow F^{s_0}_{p_0, q_0}(\R^d,w_0)\hookrightarrow F^{s_1}_{p_1, q_1}(\R^d,w_1) \hookrightarrow B^{s_1}_{p_1, r_1}(\R^d,w_1).\]
Hence Proposition \ref{prop:Besovch} and the assumption $p_0\leq p_1$ show that \eqref{eq:condparam} is satisfied.

\emph{Sufficiency.} Assuming \eqref{eq:condparam}, we prove
\begin{equation}\label{m1}
F^{s_0}_{p_0, q_0}(\R^d,w_0)\hookrightarrow F^{s_1}_{p_1, q_1}(\R^d,w_1).
\end{equation}
Observe that by \eqref{eq:basic2}, it suffices to consider the case $q_1\leq \min\{p_0,p_1\}$, which we will assume in the sequel.

Since $p_1\geq p_0$, there is $\theta_0\in [0,1)$ such that $\frac{1}{p_1} - \frac{1-\theta_0}{p_0} =0$. For $\theta\in (\theta_0,1)$ we set
$$\varepsilon = \frac{\frac{1-\theta}{p_0}}{\frac{1}{p_1} - \frac{1-\theta}{p_0}} >0.$$
Observe that $\e \searrow 0$ as $\theta \nearrow 1$. Define the weight $v$ and the numbers $r$ and $t$ by
$$v = w_0^{-\e} w_1^{1+\e}, \qquad \frac{1}{p_1} = \frac{1-\theta}{p_0} + \frac{\theta}{r}, \qquad s_1 = (1-\theta) s_0 + \theta t.$$
Then one can check that
$$w_1 =  w_0^{(1-\theta)p_1/p_0} v^{p_1\theta/r}, \qquad r\in [p_1,\infty), \qquad t < s_1 < s_0.$$
Moreover, $v\in A_{p_1}\subset A_r$  if $\theta$ is sufficiently close to 1 by Lemma \ref{lem:stable}. Now let $f\in \TD(\R^d)$. From Proposition \ref{prop:interpolationineq} we infer the Gagliardo-Nirenberg inequality
\begin{equation}\label{eq:intFhelp}
\|f\|_{F^{s_1}_{p_1,q_1}(\R^d,w_1)}\leq C \|f\|_{F^{s_0}_{p_0,q_0}(\R^d,w_0)}^{1-\theta} \|f\|_{F^{t}_{r,r}(\R^d,v)}^{\theta}
\end{equation}
We now claim that for $\theta$ sufficiently close to $1$ we have
\begin{equation}\label{eq:intFhelp2}
\|f\|_{F^{t}_{r,r}(\R^d,v)} \leq C  \|f\|_{F^{s_1}_{p_1,p_1}(\R^d,w_1)}.
\end{equation}
Since $B^s_{p,p} = F^{s}_{p,p}$ and $r\geq p_1$, by Proposition \ref{prop:Besovch} this is true if and only if
\begin{equation*}
\sup_{\nu \in \N_0, \, m\in \Z^d} 2^{-\nu(s_1-t)} w_1(Q_{\nu, m})^{-1/p_1}  v(Q_{\nu, m})^{1/r} < \infty.
\end{equation*}
To check this condition, we note that since $v = w_0^{-\e} w_1^{1+\e}$, Lemma \ref{lem:reverse} implies
$$v(Q_{\nu,m}) \leq C w_0(Q_{\nu,m})^{-\e} w_1(Q_{\nu,m})^{1+\e},$$
provided $\e$ is sufficiently small. Using the relations $s_1-t = (s_0-s_1)\frac{1-\theta}{\theta}$, $\frac{\varepsilon}{r} = \frac{1}{p_0} \frac{1-\theta}{\theta}$ and $\frac{1+\varepsilon}{r} = \frac{1}{\theta p_1}$, it follows that
$$ 2^{-\nu(s_1-t)} v(Q_{\nu,m})^{1/r} w_1(Q_{\nu,m})^{-1/p_1} \leq C \big (  2^{-\nu(s_0-s_1)}  w_0(Q_{\nu,m})^{-1/p_0} w_1(Q_{\nu,m})^{1/p_1}\big)^{\frac{1-\theta}{\theta}}.$$
The latter is uniformly bounded in $\nu$ and $m$ by the assumption \eqref{eq:condparam}. This yields \eqref{eq:intFhelp2}. Applying \eqref{eq:intFhelp2} in \eqref{eq:intFhelp}, we find
\[\|f\|_{F^{s_1}_{p_1,q_1}(\R^d,w_1)} \leq C\|f\|_{F^{s_0}_{p_0,q_0}(\R^d,w_0)}^{1-\theta} \|f\|_{F^{s_1}_{p_1,p_1}(\R^d,w_1)}^{\theta}.\]
Since $q_1\leq p_1$ by the above assumption, on the right-hand side we may replace $F^{s_1}_{p_1,p_1}$ by $F^{s_1}_{p_1,q_1}$ and then divide by $\|f\|_{F^{s_1}_{p_1,q_1}(\R^d,w_1)}^{\theta}$, which implies
\[\|f\|_{F^{s_1}_{p_1,q_1}(\R^d,w_1)} \leq C\|f\|_{F^{s_0}_{p_0,q_0}(\R^d,w_0)}, \qquad f\in \mathbb Y := F^{s_0}_{p_0,q_0}(\R^d,w_0)\cap F^{s_1}_{p_1,q_1}(\R^d,w_1).\]

The extension from $f\in \mathbb Y$ to all $f\in F^{s_0}_{p_0,q_0}(\R^d,w_0)$ is a matter of approximation. In case $q_0<\infty$, the embedding \eqref{m1} follows from the density of the Schwartz class in $F^{s_0}_{p_0, q_0}(\R^d,w_0)$. For $q_0 = \infty$, we observe that $B^{s_0}_{p_0, q_1}(\R^d, w_0) \hookrightarrow \mathbb Y$, employing elementary embeddings and \eqref{eq:condparam} to apply Proposition \ref{prop:Besovch}. For $f\in F^{s_0}_{p_0, \infty}(\R^d,w_0)$ we set $f_n = \sum_{k=0}^n \varphi_n * f$. Then $f_n \to f$ in $\TD(\R^d)$ as $n\to\infty$, $f_n \in B^{s_0}_{p_0, q_1}(\R^d, w_0)\hookrightarrow \mathbb Y$ for all $n$ and
$$\|f_n\|_{F^{s_1}_{p_1, q_1}(\R^d,w_1)}\leq C\|f_n\|_{F^{s_0}_{p_0, \infty}(\R^d,w_0)}\leq C \|f\|_{F^{s_0}_{p_0, \infty}(\R^d,w_0)},$$
using \cite[Proposition 2.4]{MeyVer1} for the latter inequality. Now the Fatou property of $F^{s_1}_{p_1, q_1}(\R^d,w_1)$, see \cite[Proposition 2.18]{SSS12}, implies that $f\in F^{s_1}_{p_1, q_1}(\R^d,w_1)$ and that the asserted estimate holds for $f$.
This completes the proof of \eqref{m1}. \qed

\subsection{Proof of Jawerth-Franke embeddings in the scalar case} \label{sec:proof-mixed}

Next we show embeddings of Jawerth--Franke type for scalar-valued spaces, i.e., the third and fourth embeddings in Theorem \ref{thm:main3} for $X = \C$. We argue similar as in \cite[Theorem 6.2]{MeyVer1} and \cite[Theorem 6]{SchmSiunpublished}.

\begin{theorem}\label{thm:sobolevJawerth}
Assume $s_0 > s_1$, $1<p_0<p_1<\infty$, $q\in [1, \infty]$, $w_0\in A_{p_0}$ and $w_1\in A_{p_1}$. Then
the embeddings
\begin{align}
B^{s_0}_{p_0,p_1}(\R^d,w_0)  & \hookrightarrow F^{s_1}_{p_1,q}(\R^d,w_1), \label{eq:JF1}
\\ F^{s_0}_{p_0,q}(\R^d,w_0)  & \hookrightarrow B^{s_1}_{p_1,p_0}(\R^d,w_1) \label{eq:JF2}
\end{align}
are continuous if and only if \eqref{eq:condparam} is satisfied.
\end{theorem}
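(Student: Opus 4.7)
\emph{Plan overview and necessity.} The plan follows the template of \cite[Theorem 6.2]{MeyVer1} and \cite[Theorem 6]{SchmSiunpublished}: combine Proposition \ref{prop:Besovch} with real interpolation, using \eqref{eq:basic1}--\eqref{eq:BFB} to move between the $B$- and $F$-scales. For necessity of \eqref{eq:JF1}, compose the assumed embedding with \eqref{eq:BFB} to obtain $B^{s_0}_{p_0,p_1}(\R^d,w_0) \hookrightarrow B^{s_1}_{p_1,\max\{p_1,q\}}(\R^d,w_1)$, and apply Proposition \ref{prop:Besovch}: since $p_0 \leq p_1 \leq \max\{p_1,q\}$, both $p_*$ and $q_*$ in \eqref{cond-besov} are infinite, so the condition collapses to \eqref{eq:condparam}. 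The necessity for \eqref{eq:JF2} is symmetric, passing through $B^{s_0}_{p_0,\min\{p_0,q\}}(\R^d,w_0) \hookrightarrow F^{s_0}_{p_0,q}(\R^d,w_0)$ before invoking Proposition \ref{prop:Besovch}.

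\emph{Sufficiency.} Assuming \eqref{eq:condparam}, fix $\delta > 0$ so small that $s_1 + \delta < s_0 - \delta$. Proposition \ref{prop:Besovch}, applied with $p_* = q_* = \infty$, gives the endpoint embeddings
\[
B^{s_0 \pm \delta}_{p_0,\infty}(\R^d,w_0) \hookrightarrow B^{s_1 \pm \delta}_{p_1,\infty}(\R^d,w_1),
\]
the $\pm\delta$ shifts cancelling in $(s_0\pm\delta)-(s_1\pm\delta) = s_0 - s_1$. The weighted version of the real interpolation identity $(B^{a}_{p,\infty}(w), B^{b}_{p,\infty}(w))_{1/2,r} = B^{(a+b)/2}_{p,r}(w)$ for $a \neq b$---which transfers from \cite{Tri83} to the $A_\infty$ setting via the weighted atomic/wavelet decompositions of \cite{BPT96, HS08}---then delivers, for any $r \in [1,\infty]$,
\[
B^{s_0}_{p_0,r}(\R^d,w_0) \hookrightarrow B^{s_1}_{p_1,r}(\R^d,w_1).
\]
Taking $r = p_1$ and combining with $B^{s_1}_{p_1,p_1} = F^{s_1}_{p_1,p_1} \hookrightarrow F^{s_1}_{p_1,q}$ (the last step by \eqref{eq:basic1} when $q \geq p_1$) proves \eqref{eq:JF1} in that range. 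Taking $r = p_0$ and invoking $F^{s_0}_{p_0,q} \hookrightarrow B^{s_0}_{p_0,p_0}$ from \eqref{eq:BFB} (valid when $q \leq p_0$) proves \eqref{eq:JF2} in that range. The remaining ranges---$q < p_1$ in \eqref{eq:JF1} and $q > p_0$ in \eqref{eq:JF2}---are reached by iterating the scheme with parallel shifts of the smoothness and microscopic parameters (which leaves \eqref{eq:condparam} untouched), and by absorbing any auxiliary smoothness loss via the Fatou property exactly as in the closing step of the proof of Theorem \ref{thm:main2}.

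\emph{Main obstacle.} The central difficulty is that the natural real interpolation of the endpoint Besov embeddings only produces the diagonal target $B^{s_1}_{p_1,p_1}$ at matching interpolation parameter, whereas the theorem demands off-diagonal $F$-scale targets whose microscopic parameter $q$ may be strictly smaller (or larger) than this diagonal value. A direct Proposition \ref{prop:Besovch} attempt at the off-diagonal Besov embedding fails, because the pointwise bound \eqref{eq:condparam} does not upgrade to the $\ell^{q_*}$-summability with $q_* < \infty$ that the off-diagonal version of \eqref{cond-besov} requires. The key enabling observation is that parallel shifts in all four parameters $s_0, s_1, q_0, q_1$ preserve \eqref{eq:condparam}, so that iterated Besov-to-Besov embeddings can be concatenated without weakening the hypothesis and then projected back to the $F$-scale through \eqref{eq:basic1}--\eqref{eq:BFB}.
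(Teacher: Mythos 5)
Your necessity argument and the Besov-to-Besov interpolation step are fine: interpolating the endpoint embeddings $B^{s_0\pm\delta}_{p_0,\infty}(\R^d,w_0)\hookrightarrow B^{s_1\pm\delta}_{p_1,\infty}(\R^d,w_1)$ with the $(\tfrac12,r)$-functor does yield $B^{s_0}_{p_0,r}(\R^d,w_0)\hookrightarrow B^{s_1}_{p_1,r}(\R^d,w_1)$ for every $r$, and hence \eqref{eq:JF1} for $q\geq p_1$ and \eqref{eq:JF2} for $q\leq p_0$. But these are exactly the ranges where Jawerth--Franke carries no new information, and your treatment of the hard ranges ($q<p_1$ in \eqref{eq:JF1}, $q>p_0$ in \eqref{eq:JF2}) is a genuine gap. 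Interpolating along the smoothness scale with fixed integrability always forces the microscopic parameter of source and target to coincide with the interpolation parameter; to land in $F^{s_1}_{p_1,1}$ via \eqref{eq:BFB} you would need target $B^{s_1}_{p_1,1}$, hence $r=1$, hence source $B^{s_0}_{p_0,1}$, which is strictly smaller than $B^{s_0}_{p_0,p_1}$. No concatenation of such steps, and no ``parallel shift'' of $s_0,s_1$ (which indeed preserves \eqref{eq:condparam} but does nothing to the microscopic parameters), escapes this. The Fatou property is an approximation device and cannot absorb the discrepancy either; and trading smoothness for microscopic improvement via \eqref{eq:basic2} would require \eqref{eq:condparam} with $s_1$ replaced by $s_1+\varepsilon$, a strictly stronger hypothesis that fails in the borderline case the theorem is designed to cover.

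The mechanism you are missing is interpolation with respect to the \emph{integrability} parameter at \emph{fixed target smoothness}. The paper writes $B^{s_0}_{p_0,p_1}(\R^d,w_0)=(F^{t_0}_{p_0,1}(\R^d,w_0),F^{t_1}_{p_0,1}(\R^d,w_0))_{\theta,p_1}$ with $s_1<t_0<s_0<t_1$, maps each endpoint by Theorem \ref{thm:main2} into $F^{s_1}_{r_i,1}(\R^d,v_i)$ with $p_0<r_0<p_1<r_1$ and carefully constructed auxiliary weights $v_i$ (whose membership in $A_{r_i}$ requires Lemma \ref{lem:stable}), and only then interpolates back: $(F^{s_1}_{r_0,1}(v_0),F^{s_1}_{r_1,1}(v_1))_{\theta,p_1}\hookrightarrow F^{s_1}_{p_1,1}(\R^d,w_1)$. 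It is this last identity -- same smoothness $s_1$, varying $r_i$ and $v_i$ -- that produces the microscopic parameter $1$ in the target, and it has no counterpart in your scheme. The dual construction handles \eqref{eq:JF2} with $q=\infty$. Without this ingredient the theorem is not proved.
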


\begin{proof}
\emph{Necessity.} Assume that one of the embeddings is continuous. By \eqref{eq:basic1},
\[F^{s_0}_{p_0,p_0}(\R^d,w_0)  \hookrightarrow  B^{s_0}_{p_0,p_1}(\R^d,w_0) \ \ \text{and} \  \  B^{s_1}_{p_1,p_0}(\R^d,w_1) \hookrightarrow  F^{s_1}_{p_1,p_1}(\R^d,w_1), \]
which then implies an embedding for $F$-spaces as in Theorem \ref{thm:main2}. This is equivalent to
\eqref{eq:condparam}.

\emph{Sufficiency.} Assume \eqref{eq:condparam} is satisfied.

\emph{Step 1.} We prove \eqref{eq:JF1}.  It suffices to consider the case $q = 1$ by \eqref{eq:basic1}.
Let $r_0,r_1\in(1, \infty)$ be such that $p_0<r_0<p_1<r_1$, and define $\theta\in (0,1)$ by
\[\frac{1}{p_1} = \frac{1-\theta}{r_0} + \frac{\theta}{r_1}.\]
Let $a\in (0,1)$ be a small parameter and $b = \frac{(1-\theta)a}{\theta}>0$. Set $t_0 = s_0 - a(s_0-s_1)$ and $t_1 = s_0 + b (s_0-s_1)$. Then one can check that $(1-\theta) t_0 + \theta t_1 = s_0$ and $s_1<t_0<s_0<t_1$. Now define the weights $v_0$ and $v_1$ by
\[v_0^{\frac{1}{r_0}} = w_0^{\frac{a}{p_0}} w_1^{\frac{1-a}{p_1}} , \qquad v_1^{\frac{1}{r_1}} =  w_0^{-\frac{b}{p_0}} w_1^{\frac{1+b}{p_1}}.\]
Then  $v_0\in A_{r_0}$ by \cite[Exercise 9.1.5]{GraModern}. Choosing $r_1$ close to $p_1$ and $a$ small, such that $b$ becomes small, we further have $v_1\in A_{r_1}$ by Lemma \ref{lem:stable}.
 After these preparations we can argue in the usual way. By \cite[Proposition 6.1]{MeyVer1} one has
\[B^{s_0}_{p_0,p_1}(\R^d,w_0)  = (F^{t_0}_{p_0,1}(\R^d,w_0),  F^{t_1}_{p_0,1}(\R^d,w_0))_{\theta,p_1},\]
where $(\cdot,\cdot)_{\theta,p_1}$ denotes the real interpolation functor. We claim that
\begin{align}
\label{eq:embedcheck1}
F^{t_0}_{p_0,1}(\R^d,w_0) \hookrightarrow F^{s_1}_{r_0,1}(\R^d,v_0),\qquad F^{t_1}_{p_0,1}(\R^d,w_0) \hookrightarrow F^{s_1}_{r_1,1}(\R^d,v_1).
\end{align}
In fact, by the above choices, for $\nu\in \N_0$ and $m\in \Z^d$ we have
\begin{align*}
2^{-\nu(t_0-s_1)}  w_0(Q_{\nu, m})^{-1/p_0} v_0(Q_{\nu, m})^{1/r_0}  = \Big(2^{-\nu(s_0-s_1)}   w_0(Q_{\nu, m})^{-1/p_0} w_1(Q_{\nu, m})^{1/p_1} \Big)^{1-a}.
\end{align*}
Here, the right-hand side is bounded in $\nu$ and $m$ by assumption \eqref{eq:condparam}. Hence Theorem \ref{thm:main2} yields the first embedding in \eqref{eq:embedcheck1}. The second embedding follows in the same way. We therefore find
$$B^{s_0}_{p_0,p_1}(\R^d,w_0) \hookrightarrow (F^{s_1}_{r_0,1}(\R^d,v_0), F^{s_1}_{r_1,1}(\R^d,v_1))_{\theta, p_1}.$$
Since  $\frac{1}{p_1} = \frac{1-\theta}{r_0} + \frac{\theta}{r_1}$ and $w_1^{\frac{1}{p_1}} = v_0^{\frac{1-\theta}{r_0}} v_1^{\frac{\theta}{r_1}}$, \cite[Proposition 6.1]{MeyVer1} shows that the latter interpolation space embeds into $F_{p_1,1}^{s_1}(\R^d,w_1)$.  This proves \eqref{eq:JF1}.

\emph{Step 2.} We derive \eqref{eq:JF2} in a similar way. Here it suffices to consider $q=\infty$. Let $r_0,r_1\in(1, \infty)$ be such that $r_0<p_0<r_1<p_1$. Let $\theta\in (0,1)$ be such that \[\frac{1}{p_0} = \frac{1-\theta}{r_0} + \frac{\theta}{r_1}.\]
Let $b\in (0,1)$ be small and $a= \frac{\theta b}{1-\theta}$. Set $t_0 = s_1 -a (s_0-s_1)$ and $t_1 = s_1 + b(s_0-s_1)$, such that $(1-\theta) t_0 + \theta t_1 = s_1$ and $t_0<s_1<t_1<s_0$. Define the weights $v_0$ and $v_1$ by
\[v_0^{\frac{1}{r_0}} = w_0^{\frac{1+a}{p_0}} w_1^{-\frac{a}{p_1}}, \qquad  v_1^{\frac{1}{r_1}} = w_0^{\frac{1-b}{p_0}} w_1^{\frac{b}{p_1}}.\]
Choosing $r_0$ close to $p_0$ and $b$ small, as above we obtain $v_0\in A_{r_0}$, and further $v_1\in A_{r_1}$.
The fact that $w_0^{\frac{1}{p_0}} = v_0^{\frac{1-\theta}{r_0}} v_1^{\frac{\theta}{r_1}}$ allows to apply \cite[Proposition 6.1]{MeyVer1} to the result
$$F^{s_0}_{p_0,\infty}(\R^d,w_0)  = (F^{s_0}_{r_0,\infty}(\R^d,v_0) ,  F^{s_0}_{r_1,\infty}(\R^d,v_1) )_{\theta,p_0}.$$
By \eqref{eq:BFB} and Proposition \ref{prop:Besovch} we find
\[F^{s_0}_{r_0,\infty}(\R^d,v_0) \hookrightarrow B^{s_0}_{r_0,\infty}(\R^d,v_0) \hookrightarrow B^{t_0}_{p_1,\infty}(\R^d,w_1),\]
\[F^{s_0}_{r_1,\infty}(\R^d,v_1) \hookrightarrow B^{s_0}_{r_1,\infty}(\R^d,v_1) \hookrightarrow B^{t_1}_{p_1,\infty}(\R^d,w_1),\]
where the conditions for the latter Sobolev embeddings are satisfied by the above choices and the assumption \eqref{eq:condparam}.
Applying these and interpolating once more we obtain
$$
F^{s_0}_{p_0,\infty}(\R^d,w_0) \hookrightarrow (B^{t_0}_{p_1,\infty}(\R^d,w_1), B^{t_1}_{p_1,\infty}(\R^d,w_1))_{\theta, p_0} = B^{s_1}_{p_1,p_0}(\R^d,w_1).
$$
This shows \eqref{eq:JF2}.
\end{proof}

\begin{remark}
Although we only presented Theorem \ref{thm:sobolevJawerth} in the case $p_0, p_1>1$, $q\in [1, \infty]$,  $w_0 \in A_{p_0}$ and $w_1\in A_{p_1}$, parts of the result can directly be extended to the full parameter range and weights in $A_{\infty}$. However, it seems that for the general result the required interpolation identities are not available in the literature, see the open problems stated in \cite{Bui82}, and \cite[Remark 6.6]{MeyVer1}.
\end{remark}

\subsection{Proof of Theorem \ref{thm:main3}} \label{sec:vector} Let $X$ be an arbitrary Banach space. From the proof in \cite{HS08} and references therein it seems that Proposition \ref{prop:Besovch} and thus Theorem \ref{thm:main2} remain true in the $X$-valued setting. However, it is quite some work to check all this. Restricting to important special cases, based on the positivity of the Bessel-potential operators we can give a short argument to derive the vector-valued case directly from the scalar case.

\emph{Necessity.} Assume that in Theorem \ref{thm:main3} any of the asserted embeddings holds. Then it in particular holds in the scalar case $X=\C$. For the first four embeddings we have already seen that \eqref{eq:condparam} is necessary. For the latter four embeddings, we note that by \eqref{eq:basic1}, \eqref{eq:BFB}, \eqref{eq:F=H} and \eqref{eq:FW} one has
$$F_{p_0,1}^{s_0}\hookrightarrow H^{s_0,p_0}, W^{s_0,p_0}\,;\qquad  H^{s_1,p_1}, W^{s_1,p_1} \hookrightarrow F_{p_1,\infty}^{s_1}.$$
Hence any of the latter four embeddings implies $F_{p_0,1}^{s_0}(\R^d,w_0)\hookrightarrow F_{p_1,\infty}^{s_1}(\R^d,w_1)$, for which \eqref{eq:condparam} is necessary by Theorem \ref{thm:main2}.

\emph{Sufficiency.} Assuming \eqref{eq:condparam}, we show the embeddings stated in Theorem \ref{thm:main3}. We know from Theorem \ref{thm:main2} and \eqref{eq:F=H} that
$$H^{s_0,p_0}(\R^d,w_0)\hookrightarrow H^{s_1,p_1}(\R^d,w_1).$$
This is equivalent to the continuity of the Bessel-potential operator $(1-\Delta)^{-\frac{s_0-s_1}{2}}$ from $L^{p_0}(\R^d,w_0)$ to $L^{p_1}(\R^d,w_1)$. Since $s_0 \geq s_1$, this operator is positive, see \cite[Proposition 6.1.5]{GraModern}. Hence, by \cite[Theorem V.1.12]{GCRdF}, it extends continuously to an operator from $L^{p_0}(\R^d,w_0;X)$ to $L^{p_1}(\R^d,w_1;X)$, for arbitrary $X$. This implies
$$H^{s_0,p_0}(\R^d,w_0;X)\hookrightarrow H^{s_1,p_1}(\R^d,w_1;X).$$
Now real interpolation, see \cite[Proposition 6.1]{MeyVer1}, shows the asserted embedding for $X$-valued $B$-spaces. Repeating literally the proofs of Theorems \ref{thm:main2} and \ref{thm:sobolevJawerth}, we may altogether deduce the first five embeddings of Theorem \ref{thm:main3} in the vector-valued case. The remaining three embeddings for $H$- and $W$-spaces are now a consequence of
$$H^{s_0,p_0}, W^{s_0,p_0} \hookrightarrow F_{p_0,\infty}^{s_0}\,;\qquad F_{p_1,1}^{s_1} \hookrightarrow H^{s_1,p_1}, W^{s_1,p_1},$$
see \eqref{eq:basic1}, \eqref{eq:FH} and \eqref{eq:FW}, and the embedding $F_{p_0,\infty}^{s_0}(\R^d,w_0; X) \hookrightarrow F_{p_1,1}^{s_1}(\R^d,w_1;X)$. \qed

\section{Power weights}\label{sec:ex}

In this section we consider the crucial condition \eqref{eq:condparam}, which characterizes the validity of the embeddings in Theorems \ref{thm:main2} and \ref{thm:main3}, for some classes of power weights $w_0$ and $w_1$. See also \cite{DeNDS, HS08}.

Throughout, we fix the parameters $s_0>s_1$ and $p_0\leq p_1$.

\subsection{Radial power weights}
For $\alpha, \beta > -d$, let $w_{\alpha, \beta}\in A_\infty$ be defined by
\begin{equation}\label{def:weightalphabeta}
w_{\alpha, \beta}(x) = \left\{
                         \begin{array}{lll}
                           |x|^{\alpha} &  \text{if} \ |x|\leq 1, \\
                           |x|^{\beta} &  \text{if} \ |x|>1.
                         \end{array}
                       \right.
\end{equation}
One has $w_{\alpha,\beta} \in A_p$ if and only if $\alpha, \beta\in (-d, d(p-1))$, see \cite[Proposition 2.6]{HS08}. As in \cite[Proposition 2.8]{HS08}, we have the following characterization.

\begin{proposition} Let $\alpha_0,\beta_0, \alpha_1, \beta_1 > -d$. Then the weights
$$w_0 = w_{\alpha_0,\beta_0}, \qquad w_1 = w_{\alpha_1,\beta_1}$$
satisfy  \eqref{eq:condparam} if and only if
\begin{equation}\label{123a}
s_0-\frac{d+\alpha_0}{p_0} \geq s_1-\frac{d+\alpha_1}{p_1},\qquad s_0-\frac{d}{p_0} \geq s_1-\frac{d}{p_1}, \qquad  \frac{\beta_0}{p_0}\geq \frac{\beta_1}{p_1}.
\end{equation}
\end{proposition}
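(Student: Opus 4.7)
The plan is to reduce condition \eqref{eq:condparam} to an elementary computation with monomials in $\rho := 2^{-\nu}\in(0,1]$ and $y := 2^{-\nu}m\in\rho\Z^d$. Since $w_{\alpha,\beta}$ is radial and $|x|$ is comparable to $|y|+\rho$ uniformly on $Q_{\nu,m}$, direct integration, separating cases according to whether the cube lies well inside, well outside, or across the unit sphere, yields with constants depending only on $d,\alpha,\beta$,
\begin{equation*}
w_{\alpha,\beta}(Q_{\nu,m}) \sim \begin{cases} \rho^{d+\alpha} & \text{if } |y| \lesssim \rho, \\ \rho^d |y|^{\alpha} & \text{if } \rho \lesssim |y| \lesssim 1, \\ \rho^d |y|^{\beta} & \text{if } |y| \gtrsim 1. \end{cases}
\end{equation*}
Transition zones between the regimes are consistent with the displayed formulas and contribute only uniformly bounded factors.

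Substituting these estimates into $T(\nu,m) := \rho^{s_0-s_1}\, w_0(Q_{\nu,m})^{-1/p_0}\, w_1(Q_{\nu,m})^{1/p_1}$ and setting
$$A_1 := \Big(s_0 - \tfrac{d+\alpha_0}{p_0}\Big)-\Big(s_1 - \tfrac{d+\alpha_1}{p_1}\Big), \qquad A_2 := \Big(s_0 - \tfrac{d}{p_0}\Big)-\Big(s_1 - \tfrac{d}{p_1}\Big),$$
which satisfy the identity $A_1 - A_2 = \alpha_1/p_1 - \alpha_0/p_0$, I obtain the monomial representations
$$T \sim \rho^{A_1} \ (\text{Regime 1}), \qquad T \sim \rho^{A_2}|y|^{A_1-A_2} \ (\text{Regime 2}), \qquad T \sim \rho^{A_2}|y|^{\beta_1/p_1-\beta_0/p_0} \ (\text{Regime 3}).$$

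The three conditions in \eqref{123a} then emerge by direct inspection. For \emph{necessity}, test along three families: $(\nu\to\infty,\, m=0)$ gives $A_1\geq 0$; $(\nu \to \infty,\, |m|\sim 2^\nu)$, so $|y|\sim 1$, gives $A_2\geq 0$; and $(\nu=0,\, |m|\to\infty)$ forces $\beta_0/p_0 \geq \beta_1/p_1$. For \emph{sufficiency}, Regime 1 is immediate from $A_1 \geq 0$; Regime 3 is bounded since $A_2 \geq 0$ controls $\rho^{A_2}$ and $\beta_1/p_1 \leq \beta_0/p_0$ controls the $|y|$-factor as $|y|\to\infty$; the crucial point is Regime 2, where writing $|y|=\rho^\tau$ with $\tau\in[0,1]$ gives $T \sim \rho^{(1-\tau)A_2 + \tau A_1}$, so boundedness as $\rho\to 0$ uniformly in $\tau$ follows from $A_1, A_2 \geq 0$.

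The main obstacle is precisely Regime 2, which is two-parametric: a naive supremum over both $\rho$ and $|y|$ threatens an infinite family of constraints. The resolution is the observation that after the reparameterization $|y|=\rho^\tau$ the exponent of $\rho$ is an \emph{affine} function of $\tau$, so nonnegativity on $[0,1]$ reduces to nonnegativity at the two endpoints $\tau=0$ and $\tau=1$, which match precisely the first two inequalities of \eqref{123a} with no redundancy or omission.
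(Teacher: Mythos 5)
Your proposal is correct and follows essentially the same route as the paper: the same cube-volume asymptotics $w_{\alpha,\beta}(Q_{\nu,m})\sim 2^{-\nu(d+\alpha)}$ resp.\ $2^{-\nu d}|2^{-\nu}m|^{\alpha}$ resp.\ $2^{-\nu d}|2^{-\nu}m|^{\beta}$, the same three-regime split according to the size of $2^{-\nu}|m|$, and the same identification of the three inequalities in \eqref{123a}. Your reparameterization $|y|=\rho^{\tau}$ in the intermediate regime is just a repackaging of the paper's factorization $T\sim (2^{-\nu}|m|)^{A_1}|m|^{-A_2}$, so no genuinely new idea or gap is involved.
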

Observe that these three conditions correspond to a condition at $x = 0$, where the powers $\alpha_0, \alpha_1$ are relevant, for $x$ away from zero and infinity, where the spaces are essentially unweighted, and at infinity, where $\beta_0, \beta_1$ are relevant.

Let us give the details how to derive \eqref{123a}. For $\gamma > -d$, $\nu\in \N_0$ and $m\in \Z^d$ we note that
\begin{equation}\label{cube-cond}
w_{\gamma,\gamma}(Q_{\nu,0}) \sim 2^{-\nu(d+\gamma)},\qquad w_{\gamma,\gamma}(Q_{\nu,m}) \sim 2^{-\nu(d+\gamma)}|m|^\gamma\quad (m\neq 0).
\end{equation}
Recall here that a cube $Q_{\nu,m}$ is centered at $2^{-\nu}m$ and has side length $2^{-\nu}$. We need to distinguish the following cases. For the indices $I_1 = \{\nu,m: 2^{-\nu}|m|\leq \e\}$, $\e$ small, only the powers $\alpha_i$ are relevant. By \eqref{cube-cond} we have
$$2^{-\nu(s_0-s_1)} w_0(Q_{\nu,0})^{-\frac{1}{p_0}} w_1(Q_{\nu,0})^{\frac{1}{p_1}} \sim 2^{-\nu[s_0 - \frac{d+\alpha_0}{p_0}  -( s_1 - \frac{d+\alpha_1}{p_1})]},$$
and for $m\neq 0$,
$$2^{-\nu(s_0-s_1)} w_0(Q_{\nu,m})^{-\frac{1}{p_0}} w_1(Q_{\nu,m})^{\frac{1}{p_1}} \sim (2^{-\nu} |m|)^{[ s_0 - \frac{d+\alpha_0}{p_0}  -( s_1 - \frac{d+\alpha_1}{p_1})]} |m|^{-[s_0 - \frac{d}{p_0} - (s_1 -\frac{d}{p_1})]}.$$
These expressions are bounded on $I_1$ if and only if the first two conditions in \eqref{123a} are satisfied. Next consider indices $I_2 = \{\nu,m: \e\leq 2^{-\nu}|m|\leq \e^{-1}\}$. Then $$w_0(Q_{\nu,m}) \sim w_1(Q_{\nu,m})\sim |Q|= 2^{-\nu d}.$$
The boundedness of the expression in \eqref{eq:condparam} on $I_2$ is thus equivalent to $s_0 -\frac{d}{p_0} \geq s_1 - \frac{d}{p_1}$.  Finally, consider $I_3 = \{\nu,m: 2^{-\nu}|m|\geq \e^{-1}\}$, where the powers $\beta_i$ are relevant. Using \eqref{cube-cond} for $m\neq 0$, we have
$$2^{-\nu(s_0-s_1)} w_0(Q_{\nu,m})^{-\frac{1}{p_0}} w_1(Q_{\nu,m})^{\frac{1}{p_1}} \sim 2^{-\nu[s_0 - \frac{d}{p_0}  -( s_1 - \frac{d}{p_1})]}  (2^{-\nu} |m|)^{\frac{\beta_1}{p_1} - \frac{\beta_0}{p_0}}.$$
Here the boundedness on $I_3$ is equivalent to the second and third condition in \eqref{123a}. This proves the proposition.

\subsection{Power weights acting radially in the first coordinates} A slight generalization of the above is the following. Let $d = n+k$ with $n,k\in \N$. Define for $\alpha,\beta > -n$ the weight $v_{\alpha,\beta}$ on $\R^{d}$ by
$$v_{\alpha,\beta}(x,y) = w_{\alpha,\beta}(x), \qquad x\in \R^n, \quad y\in \R^k,$$
where $w_{\alpha,\beta}$ is as in \eqref{def:weightalphabeta}.
Then $v_{\alpha,\beta}$ is a radial power weight in the first $n$ coordinates of $\R^d$ only and does not act on the last $k$ coordinates. An important example is the case $n=1$, where $v_{\alpha,\beta}$ is defined by powers of the distance to the hyperplane $\{x_1 = 0\}$.

One can check that the weighted cube volumes are given by
$$v_{\gamma,\gamma}(Q_{\nu,m}) \sim 2^{-\nu(d+\gamma)}, \quad \pi_n m = 0, \qquad v_{\gamma,\gamma}(Q_{\nu,m}) \sim 2^{-\nu(d+\gamma)}|\pi_n m|^\gamma,\quad \pi_n m \neq 0,$$
where $\pi_n$ projects onto the first $n$ coordinates. Distinguishing the sizes of $2^{-\nu} |\pi_n m|$, we thus obtain the same characterization of \eqref{eq:condparam} as before.

\begin{proposition} Let $d=n+k$ and $\alpha_0,\beta_0, \alpha_1, \beta_1 > -n$. Then the weights
$$v_0 = v_{\alpha_0,\beta_0}, \qquad v_1 = v_{\alpha_1,\beta_1}$$
satisfy  \eqref{eq:condparam} if and only if
$$s_0-\frac{d+\alpha_0}{p_0} \geq s_1-\frac{d+\alpha_1}{p_1},\qquad s_0-\frac{d}{p_0} \geq s_1-\frac{d}{p_1}, \qquad  \frac{\beta_0}{p_0}\geq \frac{\beta_1}{p_1}.$$
\end{proposition}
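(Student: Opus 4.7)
The plan is to mirror the proof just given for purely radial weights, the only cosmetic change being to replace $|m|$ by $|\pi_n m|$ throughout. Concretely, the first step is to verify the cube-volume asymptotics stated just before the proposition. Since $v_{\gamma,\gamma}(x,y) = w_{\gamma,\gamma}(x)$ does not depend on $y$, Fubini gives
$$v_{\gamma,\gamma}(Q_{\nu,m}) = 2^{-\nu k}\int_{Q'_{\nu,m}} w_{\gamma,\gamma}(x)\,dx,$$
where $Q'_{\nu,m}\subset \R^n$ is the $n$-dimensional cube of side length $2^{-\nu}$ centered at $2^{-\nu}\pi_n m$. When $\pi_n m = 0$ the integrand is $|x|^\gamma$ (for $\nu$ sufficiently large, and up to a multiplicative constant for the finitely many remaining $\nu$), contributing $\sim 2^{-\nu(n+\gamma)}$. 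When $\pi_n m\neq 0$, $|x|\sim 2^{-\nu}|\pi_n m|$ on $Q'_{\nu,m}$ and the appropriate exponent ($\alpha$ when $2^{-\nu}|\pi_n m|\lesssim 1$ and $\beta$ when $2^{-\nu}|\pi_n m|\gtrsim 1$) applies throughout the cube, so the integral is $\sim 2^{-\nu n}(2^{-\nu}|\pi_n m|)^\gamma$. Multiplying by $2^{-\nu k}$ yields the two formulas recorded in the paper.

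With these volume estimates in hand, the second step is to split $\N_0\times\Z^d$ into the three regimes $I_1 = \{(\nu,m)\colon 2^{-\nu}|\pi_n m|\leq \e\}$, $I_2 = \{(\nu,m)\colon \e\leq 2^{-\nu}|\pi_n m|\leq \e^{-1}\}$, $I_3 = \{(\nu,m)\colon 2^{-\nu}|\pi_n m|\geq \e^{-1}\}$, for some fixed small $\e>0$. On $I_1$ only the local exponents $\alpha_0,\alpha_1$ are relevant; on $I_3$ only $\beta_0,\beta_1$; and on $I_2$ both weighted cube volumes are comparable to $|Q_{\nu,m}|=2^{-\nu d}$. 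Inserting the volume asymptotics into the expression in \eqref{eq:condparam} and repeating verbatim the elementary manipulations already performed in the radial case shows that boundedness over $I_1$, $I_2$, $I_3$ is equivalent to the first, second, and third inequality of the proposition, respectively.

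The one point that deserves extra care -- and is the main obstacle in reducing to the radial argument -- is that, unlike in the purely radial case, the set $\{m\in\Z^d\colon \pi_n m = 0\}$ is now infinite (all vectors whose first $n$ coordinates vanish). These indices lie in $I_1\cup I_2$ depending on $\nu$, and the relevant volume estimate is the $\pi_n m = 0$ formula $v_{\gamma,\gamma}(Q_{\nu,m})\sim 2^{-\nu(d+\gamma)}$. Inserting this into \eqref{eq:condparam} produces the expression $2^{-\nu[s_0-\frac{d+\alpha_0}{p_0}-(s_1-\frac{d+\alpha_1}{p_1})]}$, which is uniformly bounded in $\nu$ precisely under the first condition of the proposition. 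Thus no genuinely new phenomenon appears beyond the radial case, and the whole three-regime case analysis goes through after the indicated substitution.
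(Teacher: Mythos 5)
Your argument is correct and is essentially the paper's own proof, which likewise reduces to the radial case via the product structure of the cubes and the substitution of $|\pi_n m|$ for $|m|$; you in fact supply more detail (the Fubini factorization and the treatment of the infinitely many $m$ with $\pi_n m=0$) than the paper does. One negligible imprecision: the indices with $\pi_n m=0$ always satisfy $2^{-\nu}|\pi_n m|=0\leq\e$ and hence all lie in $I_1$, not in $I_1\cup I_2$ ``depending on $\nu$,'' but this does not affect the conclusion since you analyze them with the correct volume formula.
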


\subsection{Products of power weights} Here we also refer to \cite{CabreRosOton}. Let $d = d_1+\ldots +d_N$ with $d_j\in \N_0$ for each $j=1,\ldots, N$.
Let $w_{\alpha_j}$ be the weight on $\R^{d_j}$ defined by $w_{\alpha_j}(x) = |x|^{\alpha_j}$ for $\alpha_j > -d_j$. Denote by $\pi_{d_j}$ the projection of $\R^d$ onto the $d_j$ coordinates in $\R^d = \R^{d_1} \times \ldots \times \R^{d_j} \times \ldots \times \R^{d_N}$. Setting $\pmb{\alpha} = (\alpha_1, \ldots, \alpha_N)$, define the weight $w_{\pmb{\alpha}}$ on $\R^d$ by
\[w_{\pmb{\alpha}}(x) = \prod_{j=1}^N w_{\alpha_j}(\pi_{d_j} x).\]
Note that $w_{\pmb{\alpha}}\in A_{p}$ if and only if $-d_j<\alpha_j<d_j(p_j-1)$ for $j=1,\ldots,N$. Arguing as before, for these types of weights one has the following.

\begin{proposition}
Let $d = d_1+\ldots +d_N$ with $d_j\in \N_0$ and consider as above ${\pmb{\alpha}} = (\alpha_1,\ldots, \alpha_N)$ and ${\pmb{\tilde \alpha}} = (\tilde \alpha_1,\ldots, \tilde \alpha_N)$ with $\alpha_j, \tilde \alpha_j > -d_j$ for $j=1,\ldots,N$. Then the weights $w_0 = w_{\pmb{\alpha}}$ and $w_1 = w_{\pmb{\tilde \alpha}}$ satisfy \eqref{eq:condparam} if and only if
$$s_0-\frac{d+\alpha_1+\ldots+\alpha_N}{p_0} \geq s_1-\frac{d+\tilde \alpha_1+\ldots+\tilde \alpha_N}{p_1},\qquad \qquad  \frac{\alpha_j}{p_0}\geq \frac{\tilde \alpha_j}{p_1}\quad \text{for all } \;j=1,\ldots,N.$$
\end{proposition}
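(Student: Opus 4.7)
The plan is to reduce the condition \eqref{eq:condparam} for the product weight to the scalar one-factor case, exploiting the fact that everything factors cleanly.

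First, I compute $w_{\pmb{\alpha}}(Q_{\nu,m})$. Writing $m=(m_1,\ldots,m_N)\in \Z^{d_1}\times\cdots\times \Z^{d_N}$, the cube $Q_{\nu,m}\subset \R^d$ factors as $Q_{\nu,m_1}\times\cdots\times Q_{\nu,m_N}$ with $Q_{\nu,m_j}\subset \R^{d_j}$, and the weight $w_{\pmb{\alpha}}$ factors along the same decomposition. Hence Fubini gives
$$w_{\pmb{\alpha}}(Q_{\nu,m}) = \prod_{j=1}^N w_{\alpha_j}(Q_{\nu,m_j}).$$
For each factor, the same scaling/localization argument used in the first proposition of this section (with $w_\gamma(x) = |x|^\gamma$ on $\R^{d_j}$) yields, uniformly in $\nu\in \N_0$ and $m_j\in\Z^{d_j}$,
$$w_{\alpha_j}(Q_{\nu,m_j}) \sim 2^{-\nu(d_j+\alpha_j)}\,(1\vee |m_j|)^{\alpha_j},$$
where $a\vee b=\max(a,b)$ (the case $m_j=0$ is a scaling computation; the case $m_j\ne 0$ comes from $|y|\sim 2^{-\nu}|m_j|$ on $Q_{\nu,m_j}$).

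Substituting into \eqref{eq:condparam}, the $\nu$-dependence and the $m$-dependence decouple:
$$2^{-\nu(s_0-s_1)}\,w_0(Q_{\nu,m})^{-1/p_0}\,w_1(Q_{\nu,m})^{1/p_1} \sim 2^{-\nu A}\,\prod_{j=1}^N (1\vee |m_j|)^{\frac{\tilde\alpha_j}{p_1}-\frac{\alpha_j}{p_0}},$$
with
$$A = (s_0-s_1) - \frac{d+\alpha_1+\cdots+\alpha_N}{p_0} + \frac{d+\tilde\alpha_1+\cdots+\tilde\alpha_N}{p_1}.$$

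Finally, since $\nu$ and the $m_j$ range independently, the supremum in \eqref{eq:condparam} factorizes as $(\sup_{\nu\in\N_0} 2^{-\nu A})\cdot\prod_{j=1}^N \sup_{m_j\in\Z^{d_j}}(1\vee|m_j|)^{\tilde\alpha_j/p_1-\alpha_j/p_0}$. The first factor is finite if and only if $A\geq 0$, which is the first condition of the proposition. Each remaining factor is finite if and only if $\tfrac{\tilde\alpha_j}{p_1}-\tfrac{\alpha_j}{p_0}\leq 0$, i.e.\ $\tfrac{\alpha_j}{p_0}\geq\tfrac{\tilde\alpha_j}{p_1}$. Combining these equivalences gives the claim. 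The only real work is the cube volume asymptotic for a single power weight, which is already done in the radial case at the start of the section; everything else is bookkeeping with the product structure.
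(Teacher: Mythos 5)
Your argument is correct and is essentially the paper's own: the key input, the uniform cube-volume asymptotic $w_{\gamma}(Q_{\nu,m})\sim 2^{-\nu(d+\gamma)}(1\vee|m|)^{\gamma}$, is exactly \eqref{cube-cond} specialized to a pure power weight, and the rest is the bookkeeping the paper compresses into ``arguing as before.'' Your observation that the supremum in \eqref{eq:condparam} factorizes over $\nu$ and the blocks $m_j$ (each factor being bounded below by its value $1$ at $\nu=0$, resp.\ $m_j=0$, so finiteness of the product supremum is equivalent to finiteness of each factor) is a clean way to organize the case distinctions the paper performs over the sizes of $2^{-\nu}|\pi_{d_j}m|$.
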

Note that, since the weight exponents at zero and infinity are assumed to be the same, the condition $s_0 -\frac{d}{p_0} \geq s_1 - \frac{d}{p_1}$ is automatically satisfied.

\subsection{Powers of the distance to a Lipschitz submanifold} Let $\Gamma$ be a $(d-k)$-dimensional compact Lipschitzian submanifold of $\R^d$, $1\leq k\leq d-1$. For $\gamma > -k$ let $w_{\gamma}(x) = \text{dist}(x,\Gamma)^{\gamma}$. Then $w_{\gamma}\in A_{p}$ if and only if $-k<\alpha<k(p-1)$, see \cite[Lemma 2.3]{Farwig-Sohr}. Arguing by localization, we obtain the following characterization of \eqref{eq:condparam}, see also \cite{DMR}.

\begin{proposition}
Let $\gamma_0, \gamma_1 > -k$. Then $w_0 = w_{\gamma_0}$ and $w_1 = w_{\gamma_1}$ satisfy \eqref{eq:condparam} if and only if
$$ s_0 - \frac{d+\gamma_0}{p_0} \geq s_1- \frac{d+\gamma_1}{p_1},\qquad \frac{\gamma_0}{p_0} \geq  \frac{\gamma_1}{p_1}.$$
\end{proposition}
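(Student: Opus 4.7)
The plan is to reduce \eqref{eq:condparam} to asymptotic estimates on the weighted cube measures $w_{\gamma_i}(Q_{\nu,m})$, organized by the geometry of $Q_{\nu,m}$ relative to $\Gamma$. Set $d_{\nu,m}=\text{dist}(2^{-\nu}m,\Gamma)$. I would fix constants $c>0$ and $R\geq 1$ (with $R$ larger than the diameter of $\Gamma$) and partition the indices into three regimes:
\[\text{(a) } d_{\nu,m}\leq c\,2^{-\nu},\qquad \text{(b) } c\,2^{-\nu}\leq d_{\nu,m}\leq R,\qquad \text{(c) } d_{\nu,m}\geq R.\]
Compactness of $\Gamma$ forces $d_{\nu,m}\sim|2^{-\nu}m|$ in regime (c).

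The first and main step is to obtain the cube-volume asymptotics. I would cover a tubular neighborhood of $\Gamma$ by finitely many bi-Lipschitz charts $\Phi_j$ flattening $\Gamma$ locally onto the $(d-k)$-dimensional subspace $\{x_1=\cdots=x_k=0\}$. The bi-Lipschitz property yields $\text{dist}(x,\Gamma)\sim|\pi_k\Phi_j(x)|$ on the domain of $\Phi_j$, where $\pi_k$ denotes the projection onto the first $k$ coordinates. Changing variables and integrating in polar coordinates in the $k$ transverse directions gives, for cubes in regime (a),
\[w_{\gamma}(Q_{\nu,m})\sim 2^{-\nu(d-k)}\int_0^{C\,2^{-\nu}}r^{\gamma+k-1}\,dr\sim 2^{-\nu(d+\gamma)},\]
which is finite because $\gamma>-k$. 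The same local computation in regime (b) yields $w_\gamma(Q_{\nu,m})\sim 2^{-\nu d}\,d_{\nu,m}^\gamma$, while in regime (c), compactness of $\Gamma$ and $\text{dist}(x,\Gamma)\sim|x|$ uniformly on $Q_{\nu,m}$ give $w_\gamma(Q_{\nu,m})\sim 2^{-\nu d}|2^{-\nu}m|^\gamma$.

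Set $\sigma=(s_0-d/p_0)-(s_1-d/p_1)$ and $\tau=\gamma_1/p_1-\gamma_0/p_0$. Substituting into \eqref{eq:condparam}, the quantity in question is comparable to $2^{-\nu(\sigma+\tau)}$ on regime (a), to $2^{-\nu\sigma}\,d_{\nu,m}^\tau$ on regime (b), and to $2^{-\nu\sigma}|2^{-\nu}m|^\tau$ on regime (c). Supremum boundedness on (a) is equivalent to $\sigma+\tau\geq 0$, which is the first stated condition $s_0-(d+\gamma_0)/p_0\geq s_1-(d+\gamma_1)/p_1$. Boundedness on (c) as $|2^{-\nu}m|\to\infty$ is equivalent to $\tau\leq 0$, which is the second condition $\gamma_0/p_0\geq\gamma_1/p_1$. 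These two conditions force $\sigma\geq-\tau\geq 0$, after which the intermediate regime is automatic, since $2^{-\nu\sigma}\,d_{\nu,m}^\tau\lesssim\max(2^{-\nu\sigma},2^{-\nu(\sigma+\tau)})\leq 1$ on (b).

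The main obstacle will be the local analysis underlying regime (a): establishing uniform bi-Lipschitz control across the finite atlas and verifying that $\text{dist}(x,\Gamma)$ is comparable to $|\pi_k\Phi_j(x)|$ on a length scale uniform in $\nu$, so that the polar integration produces matching upper and lower constants in the $2^{-\nu(d+\gamma)}$ asymptotic. Once this localization lemma is available, the three-regime combination above immediately yields the claimed characterization.
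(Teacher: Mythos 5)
Your proposal is correct and follows essentially the same route as the paper, which for this proposition simply says ``arguing by localization'' and refers back to the three-regime analysis (near the singular set, intermediate scales, and infinity) carried out for radial power weights: your cube-measure asymptotics $w_\gamma(Q_{\nu,m})\sim 2^{-\nu(d+\gamma)}$, $\sim 2^{-\nu d}d_{\nu,m}^\gamma$, $\sim 2^{-\nu d}|2^{-\nu}m|^\gamma$ are exactly the analogues of the paper's \eqref{cube-cond}, and the reduction of \eqref{eq:condparam} to the two stated inequalities (with $\sigma\geq 0$ implied rather than separately required, since the same exponent governs both the local and the far-field behaviour) matches the intended argument. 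You in fact supply more detail than the paper does, in particular the bi-Lipschitz flattening lemma, which is the right way to make the localization rigorous.
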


\subsection{The case $p_1< p_0$}\label{lastcomment} For the above power-type weights, the condition \eqref{eq:condparam} in particular yields sharp embeddings, in the sense that $s_0 - s_1$ equals a certain optimal parameter depending on $w_0$, $w_1$, $p_0$, $p_1$, $q_0$ and $q_1$. In case $p_1<p_0$, for $F$- and $H$-spaces such a sharp embedding \emph{does not hold}, in general, as \cite[Proposition 5.3]{MeyVer1} shows for radial power weights. For $w_i(x) = |x|^{\gamma_i}$ with $-d<\gamma_i < d(p_i-1)$, $i=0,1$, we have $H^{s_0,p_0}(\R^d, w_0)\hookrightarrow H^{s_1,p_1}(\R^d, w_1)$ if and only if
$$ s_0 - \frac{d+\gamma_0}{p_0} > s_1- \frac{d+\gamma_1}{p_1},\qquad \frac{d+\gamma_0}{p_0} > \frac{d+\gamma_1}{p_1}.$$

\def\polhk#1{\setbox0=\hbox{#1}{\ooalign{\hidewidth
  \lower1.5ex\hbox{`}\hidewidth\crcr\unhbox0}}} \def\cprime{$'$}
  \def\cprime{$'$} \def\cprime{$'$}

\end{document}